\newtheorem{theorem}{Theorem}[section]
\newtheorem{proposition}[theorem]{Proposition}
\newtheorem{lemma}[theorem]{Lemma}
\newtheorem{corollary}[theorem]{Corollary}
\numberwithin{equation}{section}
\numberwithin{theorem}{section}
\newcommand{\EE}{{\mathcal E}}
\newcommand{\eps}{\varepsilon}
\newcommand{\E}{\mathcal{E}}
\newcommand{\TT}{\mathbb{T}}
\newcommand{\R}{\mathbb{R}}
\title{Emergence of non-trivial minimizers for the three-dimensional
  Ohta-Kawasaki energy}
\author{Hans Kn\"upfer \footnote{Institut f\"ur Angewandte Mathematik, 
Universit\"at Heidelberg, INF 294, 69120 Heidelberg, Germany}
\and Cyrill B. Muratov \footnote{Department of Mathematical
    Sciences, New Jersey Institute of Technology, Newark, NJ 07102,
    USA} 
\and Matteo Novaga \footnote{Dipartimento di Matematica,
    Universit\`a di Pisa, Largo Bruno Pontecorvo 5, 56127 Pisa,
    Italy}}
\date{\today}
\begin{document}

\maketitle

\begin{abstract}  
  This paper is concerned with the diffuse interface Ohta-Kawasaki
  energy in three space dimensions, in a periodic setting, in the
  parameter regime corresponding to the onset of non-trivial
  minimizers. We identify the scaling in which a sharp transition from
  asymptotically trivial to non-trivial minimizers takes place as the
  small parameter characterizing the width of the interfaces between
  the two phases goes to zero, while the volume fraction of the
  minority phases vanishes at an appropriate rate. The value of the
  threshold is shown to be related to the optimal binding energy
  solution of Gamow's liquid drop model of the atomic nucleus. Beyond
  the threshold the average volume fraction of the minority phase is
  demonstrated to grow linearly with the distance to the threshold. In
  addition to these results, we establish a number of properties of
  the minimizers of the sharp interface screened Ohta-Kawasaki energy
  in the considered parameter regime. We also establish rather tight
  upper and lower bounds on the value of the transition threshold.
\end{abstract}

\tableofcontents

\section{Introduction and main results}

The Ohta-Kawasaki energy is a prototypical energy functional in the
studies of spatially modulated phases that appear as a result of the
competition of short-range attractive and long-range repulsive forces
in physical systems of very different nature. Although it was
originally introduced in the context of microphase separation in
diblock copolymer melts \cite{ohta86}, Ohta-Kawasaki energy is
relevant to a wide range of both soft and hard condensed matter
systems (for a discussion of the specific physical systems see, e.g.,
\cite{m:pre02} and references therein), as well as to dense nuclear
matter at the other extreme of energy and spatial scales
\cite{lattimer85,maruyama05}. From the mathematical point of view,
Ohta-Kawasaki energy functional, together with the closely related
Thomas-Fermi-Dirac-von Weizs\"acker energy
\cite{heisenberg34,weizsacker35,lieb81,lebris05}, serve as a paradigm
for energy-driven pattern forming systems with competing interactions
\cite{cmt:nams17}, which is why the associated variational problem has
received an increasing amount of attention in recent years
\cite{choksi01,choksi09,spadaro09,
  m:cmp10,choksi11,gms:arma13,gms:arma14,lu14}.

In the macroscopic setting, one considers the Ohta-Kawasaki energy
functional defined on a sufficiently large box with periodic boundary
conditions, i.e., for $u \in H^1(\TT_\ell)$ one sets
\begin{align}\label{EEOK}
  \E_\eps(u) := \int_{\TT_\ell} \left(\frac{\eps^2}{2} |\nabla u|^2 + 
  \frac14 (1 - u^2)^2 + \frac12 (u - \bar u_\eps) (-\Delta)^{-1} (u -
  \bar u_\eps) \right) dx ,
\end{align}
where $\TT_\ell$ is the flat $d$-dimensional torus with sidelength
$\ell > 0$, $\eps > 0$ is the parameter characterizing interfacial
thickness and assumed to be sufficiently small, and
$\bar u_\eps \in (-1,1)$ is the parameter equal to the constant
background charge density. In the sequel will investigate the limit
$\eps \to 0$, assuming that $u_\eps$ depends on $\eps$ suitably.  Of
course, the physical dimension of the underlying spatial domain is
$d = 3$. The definition in \eqref{EEOK} needs to be supplemented with
the ``electroneutrality'' constraint in order to make sense of the
last term in \eqref{EEOK}:
\begin{align}
  \label{neutr}
  {1 \over \ell^d} \int_{\TT_\ell} u \, dx = \bar u_\eps.
\end{align}
One then wishes to characterize global energy minimizers of the energy
in \eqref{EEeps} for all $\ell$ sufficiently large. These global
energy minimizers are expected to determine the ground states of the
corresponding physical system in a macroscopically large sample.

It is widely believed that as the value of $\ell$ is increased with
all other parameters fixed, the global energy minimizer of
$\mathcal E_\eps$ should be either constant or spatially periodic,
with period approaching a constant independent of $\ell$ as
$\ell \to \infty$. Proving such a {\em crystallization} result would
be one of the main challenges in the theory of energy-driven pattern
formation and is currently out of reach (for a recent review, see
\cite{blanc15}), except for the case $d = 1$, $\bar u_\eps \in (-1,1)$
fixed and $\eps > 0$ sufficiently small \cite{muller93,ren03,chen06}
(for a very recent result in that direction in higher dimensions, see
\cite{daneri18a}). On the other hand, it is known that for
$\bar u_\eps \in (-1,1)$ fixed, global energy minimizers are not
constant as soon as $\eps \ll 1$ and $\ell \gtrsim 1$
\cite{choksi01,m:cmp10}. This is in contrast with the case
$\bar u_\eps \not \in (-1,1)$, for which by direct inspection
$u = \bar u_\eps$ is the unique global minimizer of the energy. Thus,
a transition from the trivial minimizer $u = \bar u_\eps$ to a
non-trivial, spatially non-uniform minimizer of $\mathcal E_\eps$ must
occur for $\eps \ll 1$ and $\ell \gtrsim 1$ fixed as the value of
$\bar u_\eps$ increases from $\bar u_\eps = -1$ towards
$\bar u_\eps = 0$ (in view of the symmetry exhibited by the energy
when changing $u \to -u$, it is sufficient to consider only the case
$\bar u_\eps \leq 0$). In fact, for $d \geq 2$ non-trivial minimizers
emerge at some $\bar u_\eps = -1 + \delta$ with
$\eps \lesssim \delta \lesssim \eps^{2/3} |\ln \eps|^{1/3}$
\cite{choksi09,m:cmp10}, while for $d = 1$ they emerge for some
$\eps \lesssim \delta \lesssim \eps^{1/2}$ \cite{choksi09,m:pre02}.
The nature of the transition towards non-trivial minimizers is quite
delicate and at present not well understood.

In the absence of general results for non-trivial minimizers of
$\mathcal E_\eps$ for $\eps \lesssim 1$, $\bar u_\eps \in (-1,1)$ and
$\ell \gg 1$ in $d \geq 2$, one can consider different asymptotic
regimes that admit further analytical characterization. One such
regime was analyzed in \cite{gms:arma13}, where the behavior of the
minimizers of $\mathcal E_\eps$ was studied in the limit $\eps \to 0$
for $\bar u_\eps = -1 + \lambda \eps^{2/3} |\ln \eps|^{1/3}$, with
$\lambda > 0$ and $\ell > 0$ fixed, in the case $d = 2$.  In this
regime, non-trivial minimizers are expected to consist of well
separated ``droplets'' of the minority phase, i.e., regions where
$u \simeq +1$ surrounded by the sea of the majority phase where
$u \simeq -1$, separated by narrow domain walls of thickness
$\sim \eps$. It was found that there exists an explicit critical value
of $\lambda = \lambda_c > 0$ such that the minimizers of
$\mathcal E_\eps$ are non-trivial for all $\lambda > \lambda_c$, while
for $\lambda \leq \lambda_c$ the minimizers are ``asymptotically
trivial'', namely, that the energy of the minimizers converges to that
of $u = \bar u_\eps$, and the minimizer converges to $u = \bar u_\eps$
in a certain sense as $\eps \to 0$. Moreover, the threshold value
$\lambda_c$ corresponding to the onset of non-trivial minimizers was
found to be independent of $\ell$, suggesting that the transition
should persist to the macroscopic limit $\ell \to \infty$ with
$\eps \ll 1$ and $\bar u_\eps$ fixed (i.e., when commuting the order
of the $\eps \to 0$ and $\ell \to \infty$ limits). The obtained
non-trivial minimizers exhibit a kind of a homogenization limit, with
mass distributing uniformly on average throughout the
domain. Furthermore, by performing a two-scale expansion of the
energy, one can make more precise conclusions about the detailed
properties of the minimizers and, in particular, formulate a
variational problem in the whole space that determines the placement
of the connected components of the minimizers in terms of the
so-called renormalized energy, whose minimizers are conjectured to
concentrate on the vertices of a hexagonal lattice \cite{gms:arma14}.

Here, we would like to understand how the transition to non-trivial
minimizers happens when $\eps \to 0$ and $\ell \gtrsim 1$ in the
physical three-dimensional case. Therefore, from now on we fix $d = 3$
throughout the rest of the paper. Once again, in this regime the
minimizers are expected to exhibit a two-phase character, with the
minority phase occupying a small fraction of space. To this end, we
define
\begin{align}\label{def-ueps}
  \bar u_\eps:=-1+\lambda \eps^{2/3}\,,
\end{align}
where $\lambda > 0$ is fixed. Our main result is the following
theorem. 

\begin{theorem}
  \label{t:main}
  Let $\ell > 0$ and $\lambda > 0$, and let $\mathcal E_\eps$ be
  defined in \eqref{EEOK} with $\bar u_\eps$ given by
  \eqref{def-ueps}. Then, there exists a universal constant
  $\lambda_c > 0$ such that if $u_\eps$ is a minimizer of
  $\mathcal E_\eps(u)$ among all $u \in H^1(\TT_\ell)$ satisfying
  \eqref{neutr}, and $\mu_\eps \in \mathcal M^+(\TT_\ell)$ is such
  that
  $d \mu_\eps(x) = \frac12 \eps^{-2/3} \, (1 + \mathrm{sgn} \,
  u_\eps(x)) \, dx$, we have as $\eps \to 0$:
  \begin{enumerate}[(i)]
  \item $\mu_\eps \rightharpoonup 0$ in $\mathcal M(\TT_\ell)$ if
    $\lambda \leq \lambda_c$.
  \item $\mu_\eps \rightharpoonup \bar\mu$ in $\mathcal M(\TT_\ell)$,
    where $d \bar\mu = \frac12 (\lambda - \lambda_c) dx$, if
    $\lambda > \lambda_c$.
    
  \item
    $\eps^{-4/3} \mathcal E_\eps(u_\eps) \to \min\{\lambda^2 \ell^3,
    \lambda_c (2 \lambda - \lambda_c) \ell^3 \}$.
  \end{enumerate}
\end{theorem}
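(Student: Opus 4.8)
The plan is to prove Theorem~\ref{t:main} by establishing matching asymptotic lower and upper bounds for $\eps^{-4/3}\mathcal E_\eps(u_\eps)$ together with the requisite compactness for $\mu_\eps$, i.e. a $\Gamma$-convergence statement at the level of minimizers. The heuristic that organizes everything is that at the scale $\eps^{2/3}$ the minority phase splits into $O(\eps^{-1/3})$ well-separated droplets of characteristic diameter $\eps^{1/3}$, each of which, after the blow-up $x = \eps^{1/3} y$, carries the self-energy of Gamow's liquid drop model
\[
f(m) := \min_{|\Omega| = m}\Big\{ \tfrac{2\sqrt2}{3}\, P(\Omega) + \tfrac{1}{2\pi}\int_\Omega\!\int_\Omega \tfrac{dx\,dy}{|x-y|}\Big\},
\]
the two constants coming respectively from the Modica--Mortola profile of the double-well and from the Newtonian part of $(-\Delta)^{-1}$ together with the factor $2$ by which $u$ jumps across an interface. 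Writing $e_0 := \inf_{m>0} f(m)/m$ for the optimal binding energy per unit mass and setting $\lambda_c := \tfrac14 e_0$, the content of the theorem is that the rescaled energy converges to the value of the scalar minimization $\min_{\rho \ge 0}\ \ell^3[(\lambda - 2\rho)^2 + e_0\,\rho]$, whose unique minimizer is $\rho = 0$ when $\lambda \le \lambda_c$ and $\rho = \tfrac12(\lambda-\lambda_c)$ when $\lambda>\lambda_c$; substituting back reproduces the two branches $\lambda^2\ell^3$ and $\lambda_c(2\lambda-\lambda_c)\ell^3$ in (iii), while $\rho$ is exactly the limiting density in (i)--(ii). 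Here $\rho$ is the macroscopic density of $\mu_\eps$, the term $e_0\rho$ is the total droplet self-energy, and $(\lambda-2\rho)^2$ is the bulk double-well energy of the majority phase, whose value is forced by the neutrality constraint \eqref{neutr} to sit at the screened level $u \simeq -1 + (\lambda-2\rho)\eps^{2/3}$.

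For the \emph{upper bound} I would argue by explicit construction. When $\lambda\le\lambda_c$ the trivial competitor $u \equiv \bar u_\eps$ already gives $\eps^{-4/3}\mathcal E_\eps = \lambda^2\ell^3$, so assume $\lambda>\lambda_c$. Fix the optimal mass $m^\ast$ attaining $e_0$ and the corresponding optimal shape, and distribute $N \simeq \rho\, \ell^3 \eps^{-1/3}/m^\ast$ rescaled-unit copies of it (of diameter $\sim\eps^{1/3}$) on a periodic sublattice of $\TT_\ell$ with spacing $\sim\eps^{1/9}$, rounding the interfaces at scale $\eps$ by the optimal one-dimensional profile and setting the background to the screened value, with the count adjusted so that \eqref{neutr} holds exactly. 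Each droplet contributes $\eps^{5/3}f(m^\ast)$ from surface and self-interaction, the $N$ droplets summing to $e_0\rho\,\ell^3\eps^{4/3}$; the background double-well energy contributes $(\lambda-2\rho)^2\ell^3\eps^{4/3}$; and the remaining nonlocal interaction among the droplets and the neutralizing background is subleading because the configuration is macroscopically neutral (the droplet--droplet and droplet--background potentials cancel to the relevant order, the residual being $o(\eps^{4/3})$). Choosing $\rho = \tfrac12(\lambda-\lambda_c)$ gives the matching upper bound.

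The \emph{lower bound} is the crux and where the real work lies. The local energy must be split into an interface part and a genuinely retained bulk part: one cannot simply pass to the perimeter and discard the potential, since the bulk double-well energy of the majority phase is itself of leading order $\eps^{4/3}$ and produces the $(\lambda-2\rho)^2$ term. Thus I would (a) use a calibration/Modica--Mortola argument on the transition region to bound $\int(\tfrac{\eps^2}{2}|\nabla u|^2 + \tfrac14(1-u^2)^2)$ below by $\tfrac{2\sqrt2}{3}\eps\,P(\{u>0\})$ plus the bulk contribution, and (b) on the majority region use the quadratic behaviour of the potential near $u=-1$ together with Jensen's inequality to bound the retained bulk energy below by $(\lambda-2\rho)^2\ell^3\eps^{4/3}$, the average of $1+u$ there being pinned by \eqref{neutr} and by the droplet volume $\rho\ell^3\eps^{2/3}$. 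Next, the nonlocal term is bounded below by the sum of the droplet self-energies: restricting the Newtonian kernel to each connected component of $\{u>0\}$ (a screening/truncation that only loses a subleading amount by macroscopic neutrality) and combining with the perimeter bound gives, component by component, $\tfrac{2\sqrt2}{3}\eps P(\Omega_i) + (\text{self-Coulomb})_i \ge \eps^{5/3} f(\hat m_i) \ge \eps^{5/3} e_0\,\hat m_i$, and summing yields $\ge e_0\rho\,\ell^3\eps^{4/3}$. The main obstacle is controlling these nonlocal cross terms and simultaneously forcing macroscopic uniformity: the $H^{-1}$ nonlocal energy penalizes large-scale charge imbalance, and I would exploit this --- localizing the combined lower bound on a mesoscopic partition of $\TT_\ell$ and invoking the strict convexity of $\rho\mapsto(\lambda-2\rho)^2 + e_0\rho$ --- to show that any deviation of the droplet distribution from the uniform one strictly raises the energy.

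Finally, compactness and the passage to the limit. The energy bound from the upper construction gives $\sup_\eps \mu_\eps(\TT_\ell)<\infty$, hence $\mu_\eps \rightharpoonup \bar\mu$ along subsequences for some $\bar\mu\in\mathcal M^+(\TT_\ell)$; the localized lower bound identifies $\bar\mu = \rho\,dx$ with $\rho$ constant and forces $\rho$ to minimize the scalar problem above, so $\bar\mu = 0$ for $\lambda\le\lambda_c$ and $\bar\mu = \tfrac12(\lambda-\lambda_c)\,dx$ for $\lambda>\lambda_c$, giving (i)--(ii); since the limit is unique the full family converges. Matching the lower and upper bounds then yields (iii). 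The delicate points I expect to absorb most of the effort are the simultaneous extraction of the self-energy and enforcement of uniformity through the nonlocal term, and the precise bookkeeping of the constants $\tfrac{2\sqrt2}{3}$, $\tfrac1{2\pi}$ and hence of $\lambda_c$ in terms of the liquid-drop problem.
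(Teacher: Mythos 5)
Your scalar reduction, the identification $\lambda_c=\tfrac14 e_0$ (which, with $\sigma=2\sqrt2/3$ and $\kappa^2=W''(1)^{-1}=\tfrac12$, agrees with the paper's $\lambda_c=2^{-1/3}\sigma^{2/3}\kappa^2 f^*$), and the upper-bound construction are all sound and essentially match what the paper does. The genuine gap is in the lower bound, at exactly the step you describe as ``a screening/truncation that only loses a subleading amount by macroscopic neutrality.'' Discarding all but the droplet self-interactions in the Coulomb term is not a one-sided operation: the charge density $u-\bar u_\eps$ is positive on the droplets and \emph{negative} on the majority region, so the droplet--background cross terms are negative and, for $N\sim\eps^{-1/3}$ droplets of charge $\sim\eps$ at $O(1)$ separations, of the leading order $\eps^{4/3}$ --- the same order as the terms you want to keep. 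Global neutrality does not make them $o(\eps^{4/3})$; they are only controlled after being combined with the bulk double-well energy, i.e.\ after the majority-phase relaxation is integrated out. This is precisely the screening mechanism: the paper first reduces $\mathcal E_\eps$ to the \emph{screened} sharp-interface energy with kernel $(-\Delta+\kappa^2)^{-1}$ (via \cite[Props.~4.2--4.3]{m:cmp10}, Theorem \ref{teogammabis}), and only then splits that screened Green's function into a near-field part (which yields the Gamow self-energy $f^*$, following \cite{kmn:cmp16}) and a far-field part $G_\rho$ which is \emph{retained}, not discarded: it is continuous under weak-$*$ convergence and produces the quadratic term $2\iint G\,d\mu\,d\mu$ of the limit functional $E_0$ in \eqref{E0}. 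In your bookkeeping that term is hidden inside $(\lambda-2\rho)^2$, which is legitimate for the constant-background recovery sequence but not for an arbitrary competitor, whose majority phase need not sit at a constant level.

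The same omission undermines your mechanism for forcing uniformity of the limit measure. Your Jensen bound on the bulk term only sees the \emph{global} average of $1+u$, so by itself it cannot distinguish a uniform droplet distribution from a clustered one with the same total mass; and the localized version of $(\lambda-2\rho(x))^2+e_0\rho(x)$ on a mesoscopic partition presupposes local neutrality on each cell, which is again a consequence of the retained far-field nonlocal energy, not of the constraint \eqref{neutr}. In the paper this is handled structurally: the $\Gamma$-limit is the nonlocal quadratic form $E_0(\mu)$ (Theorems \ref{teogamma} and \ref{teogammabis}), whose strict convexity in $\mu$ through $\iint G\,d\mu\,d\mu$ is what singles out the uniform minimizer $\bar\mu$ in Proposition \ref{p:minE0}. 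So your outline has the right answer and the right heuristic energy balance, but the two load-bearing steps of the lower bound --- the screening reduction that converts the bulk potential into the kernel $\kappa^2$, and the near-field/far-field decomposition that separates the Gamow self-energy from the surviving mean-field interaction --- are missing, and the argument as written would not close.
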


Thus, the onset of non-trivial minimizers in three space dimensions
occurs sooner in terms of $0 < 1 + \bar u_\eps \ll 1$ than the
corresponding transition in two dimensions. In particular, cylindrical
morphologies obtained by trivially extending the two-dimensional
minimizers into the third dimension are no longer global energy
minimizers. One would, therefore, expect that the emergent non-trivial
minimizers consist of a collection of well separated small droplets of
the minority phase surrounded by the sea of the majority
phase. Furthermore, the size and the distance between the droplets
would scale differently (cf. \cite{kmn:cmp16}) from those in two
dimensions, and in contrast to the latter \cite{gms:arma13,gms:arma14}
we can no longer conclude that the droplets are nearly
spherical. Still, we are able to express the shape and size of the
individual droplets in terms of the non-local isoperimetric problem in
the whole space that goes back to Gamow \cite{gamow30,cmt:nams17} (for
details, see the following sections). In particular, this allows us to
obtain a quantitative estimate for the threshold $\lambda_c$, using
balls as competitors in the whole space problem and a recent
quantitative non-existence result for the Gamow's liquid drop model
\cite{frank16}.

\begin{theorem}
  \label{t:f*}
  With the notation of Theorem \ref{t:main}, we have
  \begin{align}
    \label{lamcB}
    {3 \over 4 \sqrt[3]{2}} \leq \lambda_c \leq {3 \over 2 \sqrt[3]{5}}.
  \end{align}
\end{theorem}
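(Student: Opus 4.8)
The plan is to reduce both inequalities in \eqref{lamcB} to sharp estimates on the whole-space Gamow functional that, by the analysis of the preceding sections, characterizes $\lambda_c$. Recall that $\lambda_c$ is given by the optimal energy per unit volume of the sharp-interface liquid drop problem in $\R^3$: up to the explicit normalization arising from optimizing the droplet scale, one has
\begin{equation}
  \lambda_c = c_0\,\inf_{F}\frac{P(F)^{2/3}\,V(F)^{1/3}}{|F|},\label{e:char}
\end{equation}
where the infimum runs over all sets $F\subset\R^3$ of finite perimeter and positive measure, $P(F)$ is the perimeter, $V(F)$ the normalized Coulombic self-energy $\iint_{F\times F}|x-y|^{-1}\,dx\,dy$, and $c_0$ is the explicit constant produced by minimizing the Gamow energy over dilations of $F$ (here $P\sim t^2$, $V\sim t^5$, $|F|\sim t^3$ under $F\mapsto tF$, which fixes both the exponents above and the value of $c_0$). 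Since the right-hand side of \eqref{e:char} is scale invariant, the two bounds amount to upper and lower bounds on the scale-invariant quantity $P(F)^2 V(F)/|F|^3$.

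For the upper bound I would simply use balls as competitors in \eqref{e:char}. Evaluating $P$, $V$ and $|\cdot|$ on a ball is an explicit classical computation (the Coulombic self-energy of a uniformly charged ball is proportional to the fifth power of its radius), and after inserting these values together with $c_0$ one obtains precisely $\lambda_c \le 3/(2\sqrt[3]{5})$. This step is routine and uses nothing beyond \eqref{e:char}.

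The lower bound is where the work lies, and it reduces to the scale-invariant geometric inequality
\begin{equation}
  P(F)^2\,V(F) \ge C_0\,|F|^3 \qquad\text{for all finite-perimeter } F\subset\R^3,\label{e:key}
\end{equation}
with $C_0$ the constant for which \eqref{e:char} yields $\lambda_c\ge 3/(4\sqrt[3]{2})$. The obstacle is that \eqref{e:key} cannot be obtained by comparison with the ball: while the isoperimetric inequality gives $P(F)\ge P(B_{|F|})$, the Riesz rearrangement inequality shows that the ball \emph{maximizes} $V$ at fixed volume, so $V(F)$ admits no lower bound by its value on a ball, and the two effects pull in opposite directions. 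A genuinely different lower bound on the Coulombic energy is therefore required, and this is exactly what the quantitative non-existence result of \cite{frank16} supplies: it controls from below the liquid drop energy $P(F)+V(F)$ at fixed volume, equivalently it quantifies how far a near-minimizing set must spread out, and hence how large its perimeter must be when its Coulombic energy is small.

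Concretely, I would first use the non-existence statement to confine the optimal droplet in \eqref{e:char} to the range of volumes in which the energy is not lowered by splitting into well-separated pieces, and then feed the resulting quantitative lower bound on $P(F)+V(F)$ into the dilation optimization to produce the constant $C_0$ in \eqref{e:key}; tracking the constants through that optimization gives the stated value $3/(4\sqrt[3]{2})$. I expect the delicate point to be the interplay between the isoperimetric deficit and the Coulombic lower bound: extreme competitors (thin, elongated, or spread-out sets) make $V$ small but force $P$ to be large, so the estimate must combine the two in a scale-invariant way with sharp enough constants, which is precisely the content of the quantitative bound from \cite{frank16}.
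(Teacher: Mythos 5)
Your upper bound is exactly the paper's: insert a ball into the Gamow quotient $f^*$ defined in \eqref{f*}, optimize over the radius to get \eqref{f*B}, and convert through $\lambda_c = 2^{-1/3}\sigma^{2/3}\kappa^2 f^*$ with $\sigma,\kappa$ evaluated for the quartic well. Your reformulation of the lower bound as the scale-invariant inequality $P(F)^2V(F)\ge C_0\,|F|^3$ is also a correct restatement of a lower bound on $f^*$ (it is what remains after optimizing the Gamow energy over dilations), and you correctly observe that it cannot follow from comparison with the ball, since Riesz rearrangement makes the ball a \emph{maximizer} of $V$ at fixed volume.

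The gap is in how you propose to prove that inequality. The result of \cite{frank16} is not a quantitative lower bound on $P(F)+V(F)$ at fixed volume --- no such bound with the required sharp constant is available --- but an upper bound on the volume of the optimal droplet, $|F^*|\le 32\pi$. The mechanism that actually closes the argument, and which is absent from your sketch, is the dilation-stationarity (equipartition) identity at a minimizer $F^*$ of the energy-per-unit-volume problem (whose existence, Theorem \ref{t:gamow}, you also need and only implicitly assume): comparing $F^*$ with $tF^*$ and using the homogeneities $P\sim t^2$, $V\sim t^5$, $|F|\sim t^3$ forces $V(F^*)=\tfrac12 P(F^*)$, as established in \cite{frank15}. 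This eliminates the Coulomb term entirely, giving $f^* = \tfrac{3P(F^*)}{2|F^*|}$; the classical isoperimetric inequality then yields $f^*\ge \left(243\pi/(2|F^*|)\right)^{1/3}$, so that the single input $|F^*|\le 32\pi$ from \cite{frank16} gives $f^*\ge 3^{5/3}/4$ and hence $\lambda_c \ge 3/(4\sqrt[3]{2})$. Without the equipartition step your plan has no way to bound $V(F)$ from below (or to remove it from the estimate), and the ``quantitative lower bound on $P+V$'' you attribute to \cite{frank16} does not exist in that form.
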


Note that numerically the bound in Theorem \ref{lamcB} appears to be
fairly tight: $0.5952 < \lambda_c < 0.8773$, with the lower bound to
within $33\%$ of the value of the upper bound. Note that if the
conjecture that the minimizers of Gamow's liquid drop model are balls
is true, then the upper bound in Theorem \ref{lamcB} should in fact
yield equality.

Our proof relies on our previous results obtained for the
three-dimensional sharp interface version of the Ohta-Kawasaki energy
\cite{kmn:cmp16}. Together with the approach from \cite[Section
4]{m:cmp10}, the result in Theorem \ref{t:main} is obtained along the
lines of the arguments in \cite{gms:arma13}, suitably adapted from the
two-dimensional to the three-dimensional case. Note, however, that the
results in \cite{kmn:cmp16} cannot be directly combined with those of
\cite{m:cmp10}, since the sharp interface energy studied in
\cite{kmn:cmp16} does not include the effect of {\em charge
  screening}. In fact, there is no transition from trivial to
non-trivial minimizers in the unscreened sharp interface
energy. Therefore, as a first step towards the proof one needs to
adapt the results of \cite{kmn:cmp16} to the case of screened sharp
interface energy and obtain an asymptotic characterization of its
minimizers as $\eps \to 0$.  

As in \cite{kmn:cmp16}, we separate the non-local energy into the
near-field and far-field contributions, with screening appearing
explicitly in the latter. At the same time, the self-interaction
energy of the droplets turns out to be still well approximated by that
of Gamow's liquid drop model. Combining the far-field with the
near-field contributions to the energy then allows to establish a
$\Gamma$-convergence result for the screened sharp interface energy to
an energy functional which is quadratic in the limit charge density,
with the notion of convergence being the weak convergence of
measures. Along the way, we establish uniform estimates for the
connected components of the minimizers similar to those in
\cite{kmn:cmp16}, which, in turn, allows to characterize non-existence
of nontrivial minimizers of the screened sharp interface energy below
the threshold for all $\eps$ sufficiently small.

Once the $\Gamma$-convergence result is established for the screened
sharp interface energy, we proceed as in \cite{gms:arma13} by
introducing a piecewise-constant charge density associated with the
admissible configurations for the diffuse interface energy that
eliminates the small deviations of the charge density from their
equilibrium values $\pm 1$ for the double-well potential (for a more
detailed explanation of the need of such a step, see the beginning of
Sec. 2.2 in \cite{gms:arma13}). We then adapt the arguments of
\cite[Section 6]{gms:arma13} to obtain the corresponding
$\Gamma$-convergence result for the diffuse interface energy to the
same quadratic functional in the limit charge density as for the
screened sharp interface energy. Finally, explicitly minimizing the
limit energy we obtain the main result of our paper contained in
Theorem \ref{t:main}. Furthermore, we relate the value of the
threshold $\lambda_c$ with the optimal energy per unit mass for
Gamow's liquid drop model. In addition, we use recent results in
\cite{frank15,frank16} characterizing the minimizers of the latter
problem to obtain sharp quantitative bounds on the value of the
threshold.

To summarize, our paper provides an extension of various recent
results for the diffuse interface Ohta-Kawasaki energy to the case of
a macroscopic three-dimensional domain, establishing a sharp
transition from trivial to nontrivial minimizers in the asymptotic
limit of vanishingly thin interfaces. Most of the techniques used in
our proofs are adaptations of those that appeared in the earlier
studies of this problem in different settings. The main novelty of our
results, however, is the way these arguments are combined to yield a
non-trivial scaling for the transition to nontrivial minimizers and
the limit energy functional for the three-dimensional Ohta-Kawasaki
energy. To our knowledge, this is the first sharp asymptotic result
for this energy in the regime of strong compositional asymmetry and
large number of droplets (for the case of finitely many droplets, see
\cite{choksi11}). We note that the present lack of knowledge about the
minimizers of Gamow's liquid drop model prevents us to go to the next
order in a two-scale $\Gamma$-expansion to describe local interactions
of droplets via a ``renormalized energy'' \cite{rougerie16}. In
particular, it is not known at present whether the minimizer per unit
mass exists only for a unique value of the mass (this would be true if
minimizers were balls). Thus, further insights into the solution of
Gamow's model would be needed to carry out the programme realized for
the two-dimensional Ohta-Kawasaki energy in \cite{gms:arma14}.

Our paper is organized as follows. In Sec.~\ref{sec:setting}, we
introduce the different energies appearing in our study and state a
number of results related to each of the associated variational
problems. Also in this section, we prove Theorem \ref{t:f*lb} that
gives a quantitative lower bound for the self-interaction energy per
unit mass for Gamow's liquid drop model. Then, in Sec.~\ref{sec:sharp}
we state the $\Gamma$-convergence result for the sharp interface
energy in Theorem \ref{teogamma}, followed by a proof. Also in
Sec. \ref{sec:sharp}, we provide some further results about the
connected components of minimizers of the screened sharp interface
energy, see Theorem \ref{t:diam} and Corollary \ref{t:triv}.  Finally,
in Sec. \ref{sec:diffuse} we state and prove the corresponding
$\Gamma$-convergence result for the diffuse interface energy, see
Theorem \ref{teogammabis}. The results in Theorems \ref{t:main} and
\ref{t:f*} are then obtained as simple corollaries of the above
theorems.

\section{Setting}
\label{sec:setting}

In this section, we introduce the basic notation used throughout the
rest of the paper, together with the assumptions and some technical
results.

\subsection{The diffuse interface energy} 
\label{subsec:diffuse}

We begin by generalizing the diffuse energy functional in \eqref{EEOK}
to one involving an arbitrary symmetric double-well potential $W(u)$:
\begin{align}\label{EEeps} %
  \E_\eps(u) := \int_{\TT_\ell} \left(\frac{\eps^2}{2} |\nabla u|^2 + 
  W(u) + \frac12 (u - \bar u_\eps) (-\Delta)^{-1} (u - \bar u_\eps)
  \right) dx,
\end{align}
with $W(u)$ satisfying \cite{m:cmp10}:
\begin{enumerate}
\item[(i)] $W \in C^2(\mathbb R)$, $W(u) = W(-u)$, and $W \geq 0$, 

\item [(ii)] $W(+1) = W(-1) = 0$ and $W''(+1) = W''(-1) > 0$,

\item [(iii)] $W''(|u|)$ is monotonically increasing for $|u| \geq 1$,
  $\lim_{|u| \to \infty} W''(u) = +\infty$, and
  $|W'(u)| \leq C (1 + |u|^q)$, for some $C > 0$ and $1 < q <5$.
\end{enumerate}
This energy is clearly well-defined and bounded on the admissible
class
\begin{align}  \label{Aeps} %
  \mathcal A_\eps := \left\{ u \in H^1(\TT_\ell) \ : \ {1 \over
  \ell^3} \int_{\TT_\ell} u \, dx = \bar u_\eps \right\},
\end{align}
with the non-local term interpreted, as usual, with the help of the
Green's function $G_0(x)$ solving
\begin{align}
  \label{G0}
  -\Delta G_0(x) = \delta(x) - \ell^{-3}, \qquad \int_{\TT_\ell}
  G_0(x) \, dx = 0
\end{align}
in $\mathcal D'(\TT_\ell)$. Explicitly, the energy takes the form
\begin{align}
  \label{EEepsG}
  \EE_\eps(u) = \int_{\TT_\ell} \left( {\eps^2 \over 2} |\nabla u|^2 +
  W(u) \right) dx + \frac12 \int_{\TT_\ell} \int_{\TT_\ell}  (u(x) - \bar 
  u_\eps) G_0(x - y) (u(y) - \bar u_\eps) \, dx \, dy,
\end{align}
noting that the last term in the right-hand side is well-defined by
Young's inequality.

Under the assumptions above, every critical point
$u \in \mathcal A_\eps $ of $\EE_\eps$ solves weakly the
Euler-Lagrange equation, which can be written as (see \cite[Section
4]{m:cmp10})
\begin{align}
  \label{ELeps}
  -\eps^2 \Delta u + W'(u) + v = \Lambda,  \qquad
  - \Delta v = u - \bar u_\eps,
\end{align}
where $v \in H^3(\TT_\ell)$ is a zero-average solution of the second
equation in \eqref{ELeps} and $\Lambda \in \R$ is the Lagrange
multiplier satisfying
\begin{align}
  \label{mu}
  \Lambda = {1 \over \ell^3} \int_{\TT_\ell} W'(u) \, dx,
\end{align}
as can be seen by integrating the first equation in \eqref{ELeps} over
$\TT_\ell$. In particular, we have
\begin{align}
  \label{vG0}
  v(x) = \int_{\TT_\ell} G_0(x - y) (u(y) - \bar u_\eps) dy,
\end{align}
and $u, v \in C^\infty(\TT_\ell)$, solving \eqref{ELeps} classically
\cite[Section 4]{m:cmp10}. Also, by the direct method of calculus of
variations, minimizers of $\EE_\eps$ are easily seen to exist for all
choices of the parameters.

\subsection{The sharp interface energy with screening} 

For $\eps \ll 1$, minimizers of $\EE_\eps$ are expected to consist of
functions which take values close to $\pm 1$, except for narrow
transition regions of width of order $\eps$ \cite{m:cmp10}. As usual,
we define the energy of an optimal one-dimensional transition layer
connecting $u = \pm 1$ \cite{modica87}:
\begin{align}
  \label{Wint}
  \sigma := \int_{-1}^{1} \sqrt{2 W(s)} \, ds > 0.
\end{align}
We also define 
\begin{align} \label{kappa} %
  \kappa:= {1 \over \sqrt{W''(1)}} > 0,
\end{align}
characterizing the effect of charge screening appearing in the sharp
interface version of the energy $\EE_\eps$, which we introduce in the
sequel. With some obvious modifications, the results of \cite[Section
4]{m:cmp10} apply to $\EE_\eps$ defined in \eqref{EEeps}, with the
corresponding sharp interface energy $E_\eps$ defined as
\begin{align}\label{Eeps} %
  E_\eps(u) %
  := \frac{\eps \sigma}{2} \int_{\TT_\ell} |\nabla u| \, dx 
  + \frac12 \int_{\TT_\ell}  (u - \bar u_\eps) (-\Delta +
  \kappa^2)^{-1} (u - \bar
  u_\eps) \,  dx,
\end{align}
where $u$ belongs to the admissible class
\begin{align} \label{A} %
  \mathcal A := BV(\TT_\ell; \{-1, 1\}).
\end{align}
Specifically, in the considered scaling regime we have the following
relation between the two energies (see the following sections):
\begin{align}
  \label{EepsEEeps}
  {\displaystyle \min_{u \in \mathcal A_\eps}
  \EE_\eps(u) \over \displaystyle 
  \min_{u \in \mathcal A} E_\eps(u)} \to 1 \qquad \text{as} \quad \eps
  \to 0.
\end{align}
Notice that the neutrality constraint in \eqref{neutr} is no longer
present in the case of the sharp interface energy. 

The energy in \eqref{Eeps} may be rewritten with the help of the
Green's function as
\begin{align}\label{Eeps2} %
  E_\eps(u) %
  := \frac{\eps \sigma}{2} \int_{\TT_\ell} |\nabla u| \, dx 
  + \frac12 \int_{\TT_\ell}  \int_{\TT_\ell}  (u(x) - \bar u_\eps) G(x
  - y) (u(y) - \bar
  u_\eps) \,  dx \, dy,
\end{align}
where $G$ solves
\begin{align} 
  \label{G2} 
  -\Delta G(x) + \kappa^2 G(x) = \delta(x)
  \qquad \text{in} \quad \mathcal D'(\TT_\ell).
\end{align}
Notice that $G$ has an explicit representation
\begin{align}
  \label{Gsum}
  G(x) = {1 \over 4 \pi} \sum_{\mathbf n \in \mathbb Z^3} {e^{-\kappa
  |x - \mathbf n \ell|} \over  |x - \mathbf n \ell|}. 
\end{align}
In particular, we have
\begin{align}
  \label{Gprops}
  G(x) \simeq {1 \over 4 \pi |x|} \qquad |x| \ll 1, 
  \qquad \qquad G(x) \geq c \qquad \forall x \in \TT_\ell,
\end{align}
for some $c > 0$ depending on $\kappa$ and $\ell$.  Also, integrating
\eqref{G2} we get
\begin{align} \label{Gint} %
  \int_{\TT_\ell} G(x) dx = \kappa^{-2}.
\end{align}
The latter allows us to rewrite the energy $E_\eps$ in an equivalent
form in terms of $\chi \in BV(\TT_\ell; \{0,1\})$, where
\begin{align}
  \label{chiu}
  \chi(x) := {1 + u(x) \over 2} \qquad \qquad x \in \TT_\ell,
\end{align}
as
\begin{multline}
  \label{Eepschi}
  E_\eps(u) = {\eps^{4/3} \lambda^2 \ell^3 \over 2 \kappa^2} + \eps
  \sigma \int_{\TT_\ell} |\nabla \chi| \, dx - {2 \eps^{2/3} \lambda
    \over \kappa^2} \int_{\TT_\ell} \chi \, dx + 2 \int_{\TT_\ell}
  \int_{\TT_\ell} G(x - y) \chi(x) \chi(y) \, dx \, dy ,
\end{multline}
where we also used \eqref{neutr}.

We now introduce a version of the energy $E_\eps$ written in terms of
the rescaling
\begin{align}
  \label{chitil}
  \tilde\chi(x) := \chi(\ell x / \ell_\eps) \quad x \in
  \TT_{\ell_\eps}, \qquad \quad \ell_\eps := \left( {4 \over \sigma \eps}
  \right)^{1/3} \ell. 
\end{align}
With this definition we have $\tilde \chi \in \widetilde{\mathcal A}_{\ell_\eps}$,
where
\begin{align}
  \label{Atileps}
  \widetilde{\mathcal A}_{\ell_\eps} := BV(\TT_{\ell_\eps}; \{0, 1\}),
\end{align}
for every $\chi \in \mathcal A$, and
$E_\eps(\chi) = \widetilde E_{\ell_\eps}(\tilde \chi)$, with
\begin{multline}
  \label{Eepstil}
  \widetilde E_{\ell_\eps}(\tilde \chi) := {\eps^{4/3} \lambda^2
    \ell^3 \over 2 \kappa^2} - {\eps^{5/3} \sigma \lambda \over 2
    \kappa^2} \int_{\TT_{\ell_\eps}} \tilde \chi \, dx \\ + \left(
    {\eps^{5/3} \sigma^{5/3} \over 4^{2/3}} \right) \left[
    \int_{\TT_{\ell_\eps}} |\nabla \tilde \chi| \, dx + \frac12
    \int_{\TT_{\ell_\eps}} \tilde \chi \left( -\Delta + 4^{-2/3}
      \eps^{2/3} \sigma^{2/3} \kappa^2 \right)^{-1} \tilde \chi \, dx
  \right].
\end{multline}
Introducing $G_\eps$, which solves
\begin{align}
  \label{Geps}
  -\Delta G_\eps(x) + 4^{-2/3} \kappa^2 \eps^{2/3} \sigma^{2/3}
  G_\eps(x) = \delta(x) \qquad \text{in} \ \TT_{\ell_\eps},
\end{align}
we can then express the energy $\widetilde E_{\ell_\eps}$ as 
\begin{multline}
  \label{Eepstil2}
   \widetilde E_{\ell_\eps}(\tilde \chi) := {\eps^{4/3} \lambda^2
    \ell^3 \over 2 \kappa^2} - {\eps^{5/3} \sigma \lambda \over 2
    \kappa^2} \int_{\TT_{\ell_\eps}} \tilde \chi \, dx \\ + \left(
    {\eps^{5/3} \sigma^{5/3} \over 4^{2/3}} \right) \left[
    \int_{\TT_{\ell_\eps}} |\nabla \tilde \chi| \, dx + \frac12
    \int_{\TT_{\ell_\eps}} G_\eps(x - y) \tilde \chi(x) \tilde \chi(y)
    \, dx \, dy
  \right].
\end{multline}
Note that, as in \eqref{Gsum}, we have the following representation for
$G_\eps$:
\begin{align}
  \label{Gsumeps}
  G_\eps(x) = {1 \over 4 \pi} \sum_{\mathbf n \in \mathbb Z^3} {e^{-
  4^{-1/3} \eps^{1/3} \sigma^{1/3} \kappa | x - \mathbf n \ell_\eps|}
  \over  | x - \mathbf n \ell_\eps|}. 
\end{align}

\subsection{The whole space energy}

As was shown by us in \cite{kmn:cmp16}, in the absence of screening,
i.e., with $\kappa = 0$ and $u \in \mathcal A$ also satisfying
\eqref{neutr}, the asymptotic behavior of the minimizers of $E_\eps$
in \eqref{Eeps} with $\bar u_\eps$ satisfying \eqref{def-ueps} can be
expressed in terms of those for the energy defined on the whole of
$\R^3$:
\begin{align}\label{eninfty}
  \widetilde E_\infty(\tilde \chi) := \int_{\R^3} |\nabla \tilde \chi|
  \, dx + \frac{1}{8 \pi}  \int_{\R^3}  \int_{\R^3} {\tilde \chi (x)
  \tilde \chi (y) \over |x - y|} \, dx \, dy, 
\end{align}
which is well defined in the admissible class
\begin{align}
  \label{Ainfty}
  \widetilde{\mathcal A}_\infty := BV(\R^3; \{0, 1\}).
\end{align}
In particular, the optimal self-energy per unit volume of the minority
phase is
\begin{align}
  \label{f*}
  f^* := \inf_{\tilde \chi \in \widetilde{\mathcal A}_\infty}
  {\widetilde E_\infty(\tilde \chi) \over 
  \int_{\R^3} \tilde \chi \, dx}. 
\end{align}
Note that within the nuclear physics context, this is precisely the
dimensionless form of the celebrated Gamow's liquid drop model of the
atomic nucleus \cite{gamow30} (for a recent mathematical overview, see
\cite{cmt:nams17}). In particular, the value of $f^*$ corresponds to
the energy per nucleon in the tightest bound nucleus.

The relationship between $E_\eps$ and $\widetilde E_\infty$ can be
seen formally by passing to the limit $\eps \to 0$ in \eqref{Eepstil2}
with $\tilde \chi$ taken to be the characteristic function of a fixed
bounded set restricted to $\TT_{\ell_\eps}$. Then we have
\begin{align}
  \left( {4^{2/3} \over \eps^{5/3} \sigma^{5/3}} \right) \left(
  \widetilde E_{\ell_\eps}(\tilde \chi) - {\eps^{4/3} \lambda^2 \ell^3
  \over 2 \kappa^2} \right) \to  -{\lambda f^* \over \lambda_c}
  \int_{\R^3} \tilde \chi \, dx + \widetilde E_\infty(\tilde \chi), 
\end{align}
where $\tilde \chi$ was extended by zero to the whole of $\R^3$, and
we defined 
\begin{align}
  \label{lamc}
  \lambda_c := 2^{-1/3}\sigma^{2/3} \kappa^2 f^*.
\end{align}

The following result was recently established about minimizers of the
problem in the whole space \cite{kmn:cmp16,frank16}.

\begin{theorem}
  \label{t:gamow}
  There exists a bounded, connected open set $F^* \subset \R^3$ with
  smooth boundary such that
  \begin{align}
    f^* =  {\widetilde E_\infty(\tilde \chi_{F^*}) \over 
    \int_{\R^3} \tilde \chi_{F^*} \, dx},
  \end{align}
  where $\chi_{F^*}$ is the characteristic function of the set $F^*$. 
\end{theorem}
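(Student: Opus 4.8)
The plan is to realize the infimum in \eqref{f*} as an attained minimum by the direct method, the only real enemy being the translation invariance of $\widetilde E_\infty$ on $\R^3$, which destroys compactness of minimizing sequences. Throughout write $P(F)$ for the perimeter of a set $F$ of finite perimeter and $\mathcal V(F):=\frac{1}{8\pi}\int_{\R^3}\int_{\R^3}|x-y|^{-1}\tilde\chi_F(x)\tilde\chi_F(y)\,dx\,dy$ for the nonlocal self-energy, so that $\widetilde E_\infty(\tilde\chi_F)=P(F)+\mathcal V(F)$. A first, elementary observation is the scaling $P(\rho F)=\rho^2 P(F)$, $\mathcal V(\rho F)=\rho^5\mathcal V(F)$ and $|\rho F|=\rho^3|F|$, so that the ratio in \eqref{f*} applied to $\rho F$ equals $\rho^{-1}P(F)/|F|+\rho^2\mathcal V(F)/|F|$ and is minimized over dilations at a unique scale; together with the isoperimetric inequality $P(F)\ge c_0|F|^{2/3}$, which forces $\widetilde E_\infty(\tilde\chi_F)/|F|\ge c_0|F|^{-1/3}\to\infty$ as $|F|\to0$, this shows that minimizing volumes stay bounded away from $0$ and that it suffices to find an optimal shape at its optimal, finite scale.

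Next I would reduce to connected competitors. Since the kernel $|x-y|^{-1}$ is strictly positive, two sets at positive distance have strictly positive cross interaction, so any competitor that splits as $F=F_1\sqcup F_2$ with $\mathrm{dist}(F_1,F_2)>0$ satisfies $\widetilde E_\infty(\tilde\chi_F)>\widetilde E_\infty(\tilde\chi_{F_1})+\widetilde E_\infty(\tilde\chi_{F_2})$; since the volumes add, the ratio of at least one of the two pieces is strictly smaller than that of $F$. Consequently the infimum in \eqref{f*} is unchanged if taken over connected sets only, and, more importantly, any minimizer cannot consist of two positively separated components — this is what will eventually yield connectedness of $F^*$.

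The heart of the argument is compactness. Take a minimizing sequence $F_k$ of connected sets; the uniform energy bound controls $P(F_k)$, hence $\tilde\chi_{F_k}$ is bounded in $BV(\R^3)$ and, after translating each $F_k$ to recenter its mass and passing to a subsequence, $\tilde\chi_{F_k}\to\tilde\chi_{F^*}$ in $L^1_{\mathrm{loc}}(\R^3)$ for some set $F^*$; lower semicontinuity of the perimeter together with Fatou's lemma for the nonnegative Coulomb integrand give $\widetilde E_\infty(\tilde\chi_{F^*})\le\liminf_k\widetilde E_\infty(\tilde\chi_{F_k})$. The difficulty — and the step I expect to be the main obstacle — is that a positive fraction of the mass may escape to infinity or spread out (the dichotomy and vanishing alternatives of concentration–compactness), so that $|F^*|$ is strictly smaller than the limiting volume and the candidate minimizer is inadmissible. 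Ruling this out is exactly the content of the compactness lemma of \cite{frank16}: one must prove the \emph{strict} binding inequality $e(m)<e(m_1)+e(m_2)$ for the fixed-volume infimum $e(m):=\inf\{\widetilde E_\infty(\tilde\chi_F):|F|=m\}$ whenever $m=m_1+m_2$ with $m_1,m_2>0$, which says that dividing a given mass into widely separated drops strictly raises the energy and therefore that no minimizing sequence can lose mass at infinity. This quantitative comparison of one drop against two separated drops, balancing the attractive surface tension against the repulsive nonlocal term, is the genuinely hard point; once it is in place, the convergence is strong, $F^*$ has the optimal finite volume $m^*$, and $\widetilde E_\infty(\tilde\chi_{F^*})/|F^*|=f^*$.

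It remains to extract the qualitative properties. As a volume-constrained minimizer, $F^*$ satisfies the Euler–Lagrange condition $H_{\partial F^*}=\Lambda-v_{F^*}$, where $v_{F^*}(x)=\frac{1}{4\pi}\int_{\R^3}|x-y|^{-1}\tilde\chi_{F^*}(y)\,dy$ is bounded and Lipschitz and $\Lambda$ is a Lagrange multiplier; hence $F^*$ is a volume-constrained almost-minimizer of perimeter with the nonlocal term as a lower-order perturbation. De Giorgi–Tamanini regularity then yields that $\partial F^*$ is $C^{1,\alpha}$ (the singular set is empty, its dimension being at most $n-8<0$ in $\R^3$), and a Schauder bootstrap — in which $\partial F^*\in C^{k,\alpha}$ improves $v_{F^*}$ to $C^{k+1,\alpha}$ by elliptic regularity, which in turn improves the mean-curvature equation — upgrades the boundary to $C^\infty$. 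Boundedness of $F^*$ follows from the density estimates for almost-minimizers together with the energy bound, as in \cite{kmn:cmp16}, and connectedness is guaranteed by the separation argument of the second step. This produces the bounded, connected, open set $F^*$ with smooth boundary asserted in the theorem.
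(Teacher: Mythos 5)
First, note that the paper does not actually prove Theorem~\ref{t:gamow}: it is quoted from \cite{kmn:cmp16,frank16}, so your attempt must be measured against the arguments of those references. Your scaling observations, the lower-semicontinuity step, and the regularity/boundedness/connectedness discussion for a minimizer \emph{once it exists} are essentially sound. The genuine gap is in the compactness step, where the logic is inverted. You propose to defeat loss of mass at infinity by proving the strict binding inequality $e(m)<e(m_1)+e(m_2)$ for \emph{every} splitting $m=m_1+m_2$, where $e(m):=\inf\{\widetilde E_\infty(\tilde\chi_F):|F|=m\}$. That inequality is false for large $m$: \cite{frank16} is a \emph{non-existence} theorem (no minimizer of $e(m)$ for $m$ large, used in this paper only for the quantitative bound $|F^*|\le 32\pi$ in the proof of Theorem~\ref{t:f*lb}), and non-existence at mass $m$ forces $e(m)=e(m_1)+e(m_2)$ for some nontrivial splitting. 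So the ``main obstacle'' as you have formulated it cannot be overcome. What actually saves the per-unit-volume problem \eqref{f*} is that it is insensitive to splitting: if a minimizing sequence decomposes into pieces $(G_i)$ drifting apart, then, dropping the nonnegative cross-interactions, $\widetilde E_\infty(\tilde\chi_F)/|F|\ge \min_i \widetilde E_\infty(\tilde\chi_{G_i})/|G_i|$, so one restarts the argument with the best piece. This is the ``generalized minimizer'' route of \cite{kmn:cmp16}: every generalized minimizer of $e(m)$ consists of finitely many genuine minimizers of $e(m_i)$, whence $f^*=\inf\{e(m)/m:\ e(m)\ \text{is attained}\}$, and this last infimum is achieved because $e(m)/m\to\infty$ as $m\to 0$, $e$ is continuous, and the set of masses admitting minimizers is closed and (by \cite{frank16} or the uniform component estimates of \cite{kmn:cmp16}) bounded.

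A second, related gap: you never bound $|F_k|$ from above along your minimizing sequence. Subadditivity of $e$ gives $e(m)/m\to f^*$ as $m\to\infty$, so minimizing sequences with $|F_k|\to\infty$ genuinely exist ($N$ near-optimal drops placed far apart, or a connected chain of them joined by necks of vanishing perimeter and volume). Your reduction to connected competitors does not exclude these: connectedness neither controls the volume nor survives the $L^1$ limit (the necks can disappear), so the recentering-plus-$BV$-compactness step does not even begin without a separate truncation or decomposition argument.
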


It has been conjectured that the minimizer of $\widetilde E_\infty$
with fixed mass is given by a ball whenever such a minimizer exists
\cite{choksi11}. Therefore, taking a ball of radius $R$ as a test
function in \eqref{f*} and optimizing in $R$, one obtains an estimate
\begin{align}
  \label{f*B}
  f^* \leq 3^{5/3}
  \cdot 2^{-2/3} \cdot 5^{-1/3}.  
\end{align}
The above conjecture would imply that the inequality in \eqref{f*B} is
in fact an equality. Proving such a result is a difficult hard
analysis problem that currently appears to be out of
reach. Nevertheless, we can establish a first quantitative lower bound
for the value of $f^*$, using equipartition of energy of $F^*$
established in \cite{frank15} and a quantitative upper bound on
$|F^*|$ obtained in \cite{frank16}. Note that the resulting lower
bound equals about 67\% of the upper bound in \eqref{f*B}. This is one
of the main results of the present paper.

\begin{theorem}
  \label{t:f*lb}
  We have
  \begin{align}
    \label{eq:f*lb}
    f^* \geq {3^{5/3} \over 4}.
  \end{align}
\end{theorem}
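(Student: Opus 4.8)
The plan is to combine the existence and regularity of the optimal set $F^*$ furnished by Theorem \ref{t:gamow} with two facts about it: an equipartition identity between its perimeter and its Coulombic self-energy, and a quantitative upper bound on its volume. Throughout, write $P(F^*) = \int_{\R^3} |\nabla \chi_{F^*}| \, dx$ for the perimeter and $I(F^*) = \frac{1}{8\pi} \int_{\R^3}\int_{\R^3} \frac{\chi_{F^*}(x)\chi_{F^*}(y)}{|x-y|}\,dx\,dy$ for the nonlocal part, so that $\widetilde E_\infty(\chi_{F^*}) = P(F^*) + I(F^*)$ and $f^* = (P(F^*)+I(F^*))/|F^*|$. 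The first step is the equipartition relation
\[
  P(F^*) = 2\, I(F^*),
\]
which is the identity established in \cite{frank15}; it also follows directly from the scale invariance of the ratio in \eqref{f*}. Indeed, $tF^* \in \widetilde{\mathcal A}_\infty$ for every $t>0$, with $P(tF^*) = t^2 P(F^*)$, $I(tF^*) = t^5 I(F^*)$, $|tF^*| = t^3 |F^*|$, so the function $t \mapsto \widetilde E_\infty(\chi_{tF^*})/|tF^*| = t^{-1} P(F^*)/|F^*| + t^2 I(F^*)/|F^*|$ is minimized at $t=1$ by optimality of $F^*$; differentiating at $t=1$ gives the claim.

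Consequently $\widetilde E_\infty(\chi_{F^*}) = P(F^*) + I(F^*) = \tfrac32 P(F^*)$, whence
\[
  f^* = \frac{3}{2}\,\frac{P(F^*)}{|F^*|}.
\]
I would then invoke the sharp isoperimetric inequality in $\R^3$, namely $P(F^*) \ge (36\pi)^{1/3} |F^*|^{2/3}$, to obtain
\[
  f^* \ge \frac{3}{2}\,(36\pi)^{1/3}\, |F^*|^{-1/3}.
\]
Thus a lower bound on $f^*$ is equivalent to an upper bound on the volume $|F^*|$, and this is precisely where the quantitative nonexistence result of \cite{frank16} enters.

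Finally, importing the bound of \cite{frank16}, after rescaling it to the normalization of \eqref{eninfty} — in which the optimal shape is scale-universal and the volume of the minimizer of the scale-invariant ratio scales inversely with the coefficient $\frac{1}{8\pi}$ of the Coulomb term — yields $|F^*| \le 32\pi$. Substituting into the previous display gives
\[
  f^* \ge \frac{3}{2}\,(36\pi)^{1/3}(32\pi)^{-1/3} = \frac{3}{2}\left(\frac{9}{8}\right)^{1/3} = \frac{3^{5/3}}{4},
\]
which is exactly \eqref{eq:f*lb}.

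The elementary combination above is routine; the entire depth of the statement is concentrated in the two imported inputs. The step I expect to require the most care is the bookkeeping when transferring the bound of \cite{frank16} to the present normalization: one must track the coefficient of the Coulomb term and verify that the quantity controlled there (the mass of a minimizer, equivalently the critical mass above which no minimizer exists) does bound $|F^*|$, i.e. the volume of the minimizer of the ratio in \eqref{f*}. A secondary point is to confirm that the scaling argument for equipartition is legitimate for the possibly non-spherical $F^*$; here it suffices that $F^*$ exists and is admissible, which is guaranteed by Theorem \ref{t:gamow}, so no further regularity is needed.
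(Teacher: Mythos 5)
Your proposal is correct and follows essentially the same route as the paper's own proof: equipartition $V(F^*)=\tfrac12 P(F^*)$ via the dilation/homogeneity argument from \cite{frank15}, the isoperimetric inequality to reduce the bound to an upper bound on $|F^*|$, and the volume bound $|F^*|\le 32\pi$ from \cite{frank16}, with the final arithmetic matching exactly. No further comment is needed.
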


\begin{proof}
  Let $F^*$ be a minimizer from Theorem \ref{t:gamow}, and write 
  \begin{align}
    f^* = {P(F^*) + V(F^*) \over |F^*|},
  \end{align}
  where $P(F^*)$ is the perimeter of $F^*$ and $V(F^*)$ is the
  Coulombic self-energy of $F^*$. By the result from \cite{frank15},
  the energy exhibits a kind of equipartition
  \begin{align}
    \label{equipF*}
    V(F^*) = \frac12 P(F^*),
  \end{align}
  which can be easily seen by considering the sets $\lambda F^*$ as
  competitors for $f^*$ and taking advantage of the homogeneity of $P$
  and $V$ with respect to dilations. Thus, we have
  \begin{align}
    f^* = {3 P(F^*) \over 2 |F^*|}.
  \end{align}
  Therefore, applying the isoperimetric inequality yields
  \begin{align}
    f^* \geq \left( {243 \pi \over 2 |F^*|} \right)^{1/3}.
  \end{align}
  The proof is then concluded by recalling the quantitative upper
  bound $|F^*| \leq 32 \pi$ from \cite{frank16}.
\end{proof}

\subsection{The limit energy}

For $\mu \in \mathcal{M}^+(\TT_\ell) \cap H^{-1}(\TT_\ell)$, define
\begin{align}
  \label{E0}
  E_0(\mu) := \frac{\lambda^2 \ell^3}{2\kappa^2} - {2 \over \kappa^2}
  \left(\lambda - \lambda_c \right) \int_{\TT_\ell} d\mu +
  2\int_{\TT_\ell} \int_{\TT_\ell} G(x - y) d \mu(x) d \mu(y).
\end{align}
Note that $\mu \in \mathcal M^+(\TT_\ell) \cap H^{-1}(\TT_\ell)$ implies that
$\mu$ is a non-negative Radon measure that has bounded Coulombic
energy:
\begin{align} \label{dmubdG} \int_{\TT_\ell} \int_{\TT_\ell} G(x - y)
  \, d \mu(x) \, d \mu(y) < \infty.
\end{align}
where $G$ is the screened Coulombic kernel from \eqref{Gsum}. The
converse is also true, i.e., a positive Radon measure with bounded
Coulombic energy defines a bounded linear functional on
$H^1(\TT_\ell)$. This fact follows from the following lemma, whose
proof is a straightforward adaptation of the proof of \cite[ Lemma
3.2]{gms:arma13} in two dimensions. In particular, it allows to extend
the definition of $E_0$ to arbitrary positive Radon measures on
$\TT_\ell$, with $E_0(\mu) < +\infty$ if and only if
$\mu \in H^{-1}(\TT_\ell)$.

\begin{lemma}   \label{lem-Gest} %
  Let $\mu \in \mathcal M^+(\TT_\ell)$ and let \eqref{dmubdG} hold. Then
  \begin{itemize}
  \item[(i)] $\mu \in H^{-1}(\TT_\ell)$, in the sense that it can be
    extended to a bounded linear functional over $H^1(\TT_\ell)$.
  \item[(ii)] If
    \begin{align}
      \label{vGconv}
      v(x) := \int_{\TT_\ell} G(x - y) \, d \mu(y),
    \end{align}
    then $v \in H^1(\TT_\ell)$. Furthermore, $v$ solves
    \begin{align}
      \label{vmu}
      -\Delta v + \kappa^2 v = \mu,
    \end{align}
    weakly in $H^1(\TT_\ell)$, and
    \begin{align}
      \label{dvmu}
      \nabla v(x) = \int_{\TT_\ell} \nabla G(x - y) \, d \mu(y),
    \end{align}
    in the sense of distributions. 
  \item[(iii)] If $v$ is as in (ii), we have $\kappa^2 \int_{\TT_\ell} v \,
    dx = \int_{\TT_\ell} d \mu$ and
    \begin{align}
      \label{Gmumu}
      \int_{\TT_\ell} \int_{\TT_\ell} G(x - y) \, d \mu(x) \, d \mu(y)
      = \int_{\TT_\ell} \left( |\nabla v|^2 + \kappa^2 v^2 \right) dx.
    \end{align}
  \end{itemize}
\end{lemma}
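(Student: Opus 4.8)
The plan is to obtain $v$ as the limit of smooth approximations generated by mollifying $\mu$, and to use the elementary energy identity for the regularized problems together with the positive-definiteness of the screened kernel $G$ to transfer all estimates, in sharp form, to the limit. The delicate point to keep in mind throughout is that \eqref{dmubdG} is a \emph{critical} condition: for a generic finite measure, such as a point mass, one has $G * \mu \notin H^1(\TT_\ell)$ because $G$ itself carries an $H^1$-critical singularity $G(x) \sim (4\pi|x|)^{-1}$ by \eqref{Gprops}; hence the $H^1$ regularity of $v$ must be extracted \emph{precisely} from finiteness of the self-energy, with no room to spare. The Fourier identity $\int_{\TT_\ell}\int_{\TT_\ell} G\,d\mu\,d\mu = \sum_k |\hat\mu(k)|^2/(|k|^2+\kappa^2) = \|v\|^2$ in the screened $H^1$ norm is what makes this transparent, and is the crux of the argument.

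First I would record two consequences of the hypotheses. Since $G \geq c > 0$ by \eqref{Gprops}, the bound \eqref{dmubdG} forces $c\,\mu(\TT_\ell)^2 \leq \int_{\TT_\ell}\int_{\TT_\ell} G\,d\mu\,d\mu < \infty$, so $\mu$ is a finite measure and its Fourier coefficients $\hat\mu(k)$ are well defined and bounded by $\mu(\TT_\ell)$. Also, as $G \geq 0$ and $\int_{\TT_\ell} G\,dx = \kappa^{-2}$ by \eqref{Gint}, Tonelli's theorem applied to $v$ in \eqref{vGconv} gives $\int_{\TT_\ell} v\,dx = \kappa^{-2}\mu(\TT_\ell) < \infty$, so $v \in L^1(\TT_\ell)$ and the first identity of part (iii) already holds. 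Next, let $\rho_\delta$ be a standard even mollifier on $\TT_\ell$ and set $\mu_\delta := \rho_\delta * \mu \geq 0$, a smooth function with $\int_{\TT_\ell}\mu_\delta\,dx = \mu(\TT_\ell)$, and $v_\delta := G * \mu_\delta = \rho_\delta * v$, which is smooth and solves $-\Delta v_\delta + \kappa^2 v_\delta = \mu_\delta$ classically. Testing this equation against $v_\delta$ gives the exact identity $\int_{\TT_\ell}(|\nabla v_\delta|^2 + \kappa^2 v_\delta^2)\,dx = \int_{\TT_\ell}\int_{\TT_\ell} G\,d\mu_\delta\,d\mu_\delta$. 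Passing to Fourier coordinates, where $\hat G(k) = (|k|^2+\kappa^2)^{-1}$ and $\widehat{\mu_\delta}(k) = \hat\rho_\delta(k)\hat\mu(k)$ with $|\hat\rho_\delta(k)| \leq 1$ and $\hat\rho_\delta(k) \to 1$, the right-hand side equals $\sum_k |\hat\rho_\delta(k)|^2 |\hat\mu(k)|^2/(|k|^2+\kappa^2)$, which is bounded above by $\int_{\TT_\ell}\int_{\TT_\ell} G\,d\mu\,d\mu$ and increases to it as $\delta \to 0$. This is exactly where positive-definiteness of $G$ is used: it yields simultaneously a uniform $H^1$ bound on $v_\delta$ and the exact convergence of the regularized self-energies.

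With $\|v_\delta\|_{H^1}^2 \leq C \int_{\TT_\ell}\int_{\TT_\ell} G\,d\mu\,d\mu$ uniform in $\delta$, I would extract a weakly convergent subsequence in $H^1$; since $v_\delta = \rho_\delta * v \to v$ in $L^1(\TT_\ell)$, the weak limit is $v$ itself, so $v \in H^1(\TT_\ell)$, which is the membership claim of part (ii). Passing to the limit in the weak formulation $\int_{\TT_\ell}(\nabla v_\delta \cdot \nabla\varphi + \kappa^2 v_\delta \varphi)\,dx = \int_{\TT_\ell}(\rho_\delta * \varphi)\,d\mu$ for $\varphi \in C^\infty(\TT_\ell)$ — using weak $H^1$ convergence on the left, and uniform convergence $\rho_\delta * \varphi \to \varphi$ together with finiteness of $\mu$ on the right — yields \eqref{vmu}, and by density the identity extends to all $\varphi \in H^1(\TT_\ell)$. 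Boundedness of $\varphi \mapsto \int_{\TT_\ell}(\nabla v \cdot \nabla\varphi + \kappa^2 v\varphi)\,dx$ by $\max(1,\kappa^2)\,\|v\|_{H^1}\|\varphi\|_{H^1}$ is then precisely part (i). The gradient representation \eqref{dvmu} follows by noting that $\nabla G \in L^1(\TT_\ell)$, since $|\nabla G(x)| \sim (4\pi|x|^2)^{-1}$ is integrable in three dimensions, so that $\nabla(G * \mu) = (\nabla G) * \mu$ after an integration by parts inside the convolution.

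Finally, for the energy identity \eqref{Gmumu} I would combine the exact convergence $\int_{\TT_\ell}\int_{\TT_\ell} G\,d\mu_\delta\,d\mu_\delta \to \int_{\TT_\ell}\int_{\TT_\ell} G\,d\mu\,d\mu$ established above with the matching convergence $\int_{\TT_\ell}(|\nabla v_\delta|^2 + \kappa^2 v_\delta^2)\,dx \to \int_{\TT_\ell}(|\nabla v|^2 + \kappa^2 v^2)\,dx$, the latter being immediate in Fourier from $\widehat{v_\delta}(k) = \hat\rho_\delta(k)\hat v(k)$ and monotone convergence, and consistent with weak lower semicontinuity of the $H^1$ norm. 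Equating the two limits of the regularized energy identity gives \eqref{Gmumu}, completing part (iii). The main obstacle, as flagged, is the criticality of the singularity: everything hinges on the fact that $\sum_k |\hat\mu(k)|^2/(|k|^2+\kappa^2)$ is at once the Coulombic self-energy and the squared screened $H^1$ norm of $v$, so that the single hypothesis \eqref{dmubdG} is simultaneously necessary and sufficient for all three conclusions, and one obtains exact identities rather than mere inequalities.
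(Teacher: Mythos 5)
Your proof is correct and follows what is essentially the intended route: the paper itself gives no argument for this lemma, deferring to the two-dimensional analogue in \cite[Lemma 3.2]{gms:arma13}, whose proof likewise proceeds by mollifying $\mu$, using the energy identity for the regularized problem, and exploiting the positive-definiteness of the screened kernel to pass to the limit. The only step you assert without justification is the identity $\int_{\TT_\ell}\int_{\TT_\ell} G\,d\mu\,d\mu=\sum_k \hat G(k)|\hat\mu(k)|^2$ for a general positive measure (you need both inequalities, since you use it once as an upper bound and once as the limit); this is standard but deserves a line, e.g.\ via the factorization $G=G^{1/2}*G^{1/2}$ with $G^{1/2}\ge 0$ the Bessel kernel with $\widehat{G^{1/2}}(k)=(|k|^2+\kappa^2)^{-1/2}$, which gives $\int\int G\,d\mu\,d\mu=\|G^{1/2}*\mu\|_{L^2}^2$ by Tonelli and then the Fourier sum by Parseval. (Also, ``increases to'' should just be ``converges to'' by dominated convergence, since $|\hat\rho_\delta(k)|$ need not be monotone in $\delta$.)
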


According to Lemma \ref{lem-Gest}, the energy $E_0$ may be
equivalently rewritten in terms of the associated potential $v$ in
\eqref{vGconv} as
\begin{align}
  \label{E0v}
  E_0(\mu) = \frac{\lambda^2 \ell^3}{2\kappa^2} - 2 (\lambda -
  \lambda_c) \int_{\TT_\ell} v 
  \, dx + 2 \int_{\TT_\ell} \left( |\nabla v|^2 + \kappa^2 v^2 \right)
  dx, 
\end{align}
and minimizing $E_0(\mu)$ over
$\mu \in \mathcal{M}^+(\TT_\ell) \cap H^{-1}(\TT_\ell)$ is the same as
minimizing the right-hand side of \eqref{E0v} with respect to all
$v \in H^1(\TT_\ell)$ such that $v \geq 0$ in $\TT_\ell$ and
$-\Delta v + \kappa^2 v \in \mathcal M^+(\TT_\ell)$. By inspection,
the latter is minimized by $v = \bar v$, where
\begin{align}
  \label{vbar}
  \bar v =
  \begin{cases}
    0, & \lambda \leq \lambda_c, \\
    {1 \over 2 \kappa^2} (\lambda - \lambda_c), & \lambda > \lambda_c.
  \end{cases}
\end{align}

In terms of the measures, we can state this result as follows:

\begin{proposition}
  \label{p:minE0}
  The energy $E_0(\mu)$ is minimized by a unique measure $\bar \mu$
  among all $\mu \in \mathcal{M}^+(\TT_\ell) \cap H^{-1}(\TT_\ell)$,
  with $\bar \mu = 0$ for all $\lambda \leq \lambda_c$, and
  $d \bar \mu = \frac12 (\lambda - \lambda_c) dx$ for
  $\lambda > \lambda_c$, respectively. Moreover, we have
  \begin{align}
    \label{E0min}
    E_0(\bar \mu) =
    \begin{cases}
      \frac{\lambda^2 \ell^3}{2\kappa^2}, & \lambda \leq \lambda_c,
      \\
     {\lambda_c(2 \lambda - \lambda_c) \ell^3 \over 2 \kappa^2} , &
     \lambda > \lambda_c,
   \end{cases}
  \end{align}
  and \eqref{vmu} is solved by $v(x) = \bar v$.
\end{proposition}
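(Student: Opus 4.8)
The plan is to work entirely with the potential formulation \eqref{E0v}, exploiting the one-to-one correspondence furnished by Lemma \ref{lem-Gest} between admissible measures $\mu \in \mathcal{M}^+(\TT_\ell) \cap H^{-1}(\TT_\ell)$ and potentials $v \in H^1(\TT_\ell)$ with $v \geq 0$ and $-\Delta v + \kappa^2 v \in \mathcal{M}^+(\TT_\ell)$. Since $E_0(\mu) = \frac{\lambda^2 \ell^3}{2\kappa^2} + F(v)$ with
\[
  F(v) := -2(\lambda - \lambda_c)\int_{\TT_\ell} v\,dx + 2\int_{\TT_\ell}\bigl(|\nabla v|^2 + \kappa^2 v^2\bigr)\,dx,
\]
it suffices to minimize $F$ over this admissible class. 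I would then split into the subcritical and supercritical cases according to the sign of $\lambda - \lambda_c$.

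For $\lambda \leq \lambda_c$ the conclusion is immediate: since $v \geq 0$ and $\lambda - \lambda_c \leq 0$, the linear term $-2(\lambda-\lambda_c)\int_{\TT_\ell} v\,dx$ is non-negative, while the quadratic term is non-negative and vanishes only at $v \equiv 0$. Hence $F(v) \geq 0 = F(0)$, with equality precisely when $v \equiv 0$; this gives the unique minimizer $\bar\mu = 0$ and $E_0(\bar\mu) = \frac{\lambda^2 \ell^3}{2\kappa^2}$.

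For $\lambda > \lambda_c$ I would obtain a sharp lower bound by discarding the gradient term and applying Jensen's inequality $\int_{\TT_\ell} v^2\,dx \geq \ell^{-3}\bigl(\int_{\TT_\ell} v\,dx\bigr)^2$, reducing the problem to the scalar minimization of $m \mapsto 2\kappa^2 \ell^{-3} m^2 - 2(\lambda - \lambda_c)m$ in the total mass $m = \int_{\TT_\ell} v\,dx$. The optimal value $m = \frac{(\lambda - \lambda_c)\ell^3}{2\kappa^2}$ yields $F(v) \geq -\frac{(\lambda - \lambda_c)^2 \ell^3}{2\kappa^2}$, and this bound is attained by the constant $\bar v = \frac{\lambda - \lambda_c}{2\kappa^2} > 0$, for which the gradient term vanishes and Jensen holds with equality. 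One checks $\bar v$ is feasible, since $-\Delta\bar v + \kappa^2\bar v = \kappa^2\bar v \geq 0$, whence $d\bar\mu = \kappa^2\bar v\,dx = \frac12(\lambda - \lambda_c)\,dx$ solves \eqref{vmu}; substituting and using $\lambda^2 - (\lambda-\lambda_c)^2 = \lambda_c(2\lambda - \lambda_c)$ recovers the claimed value of $E_0(\bar\mu)$.

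Uniqueness in both regimes follows from strict convexity: because $\kappa > 0$, the quadratic part of $F$ is comparable to the full squared $H^1$ norm, so $F$ is strictly convex on the convex admissible set and the minimizer is unique; transporting back through Lemma \ref{lem-Gest} gives uniqueness of $\bar\mu$. I do not anticipate a genuine obstacle here, as the argument is essentially a completion of the square. The only points demanding care are verifying that the candidate constant $\bar v$ meets all constraints of the potential formulation, so that the unconstrained lower bound is actually realized within the admissible class, and the handling of the borderline case $\lambda = \lambda_c$, where the linear term drops out and $v \equiv 0$ remains the unique minimizer.
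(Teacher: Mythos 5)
Your proposal is correct and follows essentially the same route as the paper, which reduces the problem to minimizing the potential formulation \eqref{E0v} over $v \geq 0$ with $-\Delta v + \kappa^2 v \in \mathcal M^+(\TT_\ell)$ via Lemma \ref{lem-Gest} and then dismisses the minimization as being ``by inspection.'' Your write-up merely supplies the details of that inspection (sign argument for $\lambda \leq \lambda_c$, Jensen's inequality and completion of the square for $\lambda > \lambda_c$, strict convexity for uniqueness), all of which check out.
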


\section{Sharp interface energy $E_\eps$} \label{sec:sharp} %

In this section, we consider the sharp-interface functional $E_\eps$
defined in \eqref{Eeps} in the limit $\eps \to 0$ with $\bar u_\eps$
given by \eqref{def-ueps} and positive $\sigma, \lambda, \kappa, \ell$
fixed. For a given $u_\eps \in \mathcal A$, we introduce a measure
$\mu_\eps$ that is continuous with respect to the Lebesgue measure on
$\TT_\ell$ and whose density is an appropriately rescaled
characteristic function of the minority phase:
\begin{align} \label{def-mueps} %
  d \mu_\eps(x) := \dfrac12 \eps^{-2/3} (1 + u_\eps(x)) dx.
\end{align}
Note that by definition the measure $\mu_\eps$ is non-negative. We
also introduce the potential $v_\eps$ via
\begin{align} \label{def-veps} %
  -\Delta v_\eps + \kappa^2 v_\eps = \mu_\eps \qquad \text{in} \quad
  \TT_\ell. 
\end{align}
Our first result establishes compactness of sequences with bounded
energy after a suitable rescaling.


\begin{theorem}[Equicoercivity]
  \label{thm-Ecompact} %
  Let $(u_\eps) \in \mathcal A$ be such that
  \begin{align}\label{ZO.1} \limsup_{\varepsilon \to 0}
    \eps^{-4/3} E_\eps(u_\eps) < +\infty,
  \end{align}
  and let $\mu_\eps$ and $v_\eps$ be defined in \eqref{def-mueps} and
  \eqref{def-veps}, respectively.  Then, up to extraction of a
  subsequence, we have
\begin{align} 
  \mu_\eps \rightharpoonup \mu \textrm{ in } \mathcal M(\TT_\ell), \qquad
  v_\eps \rightharpoonup v \textrm{ in } H^1(\TT_\ell), 
\end{align}
as $\eps \to 0$, for some
$\mu \in \mathcal{M}^+(\TT_\ell) \cap H^{-1}(\TT_\ell)$ and
$v \in H^1(\TT_\ell)$ satisfying
\begin{align}
  \label{PDE}
  -\Delta v + \kappa^2 v = \mu
  \qquad \text{in} \quad \TT_\ell.
\end{align}
\end{theorem}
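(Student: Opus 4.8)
The plan is to read off from the rescaled energy that a finite energy bound forces both the total mass and the Coulombic self-energy of $\mu_\eps$ to stay bounded, after which weak-$*$ compactness of measures and weak $H^1$ compactness of the potentials follow by soft arguments. Writing $\chi_\eps := \tfrac12(1+u_\eps)$, and introducing
\[
m_\eps := \mu_\eps(\TT_\ell) = \eps^{-2/3}\int_{\TT_\ell}\chi_\eps\,dx, \qquad Q_\eps := \int_{\TT_\ell}\int_{\TT_\ell} G(x-y)\,d\mu_\eps(x)\,d\mu_\eps(y),
\]
the first step is to rewrite \eqref{Eepschi} in terms of $\mu_\eps$, using the definition \eqref{def-mueps} together with $\int_{\TT_\ell}G = \kappa^{-2}$ from \eqref{Gint}. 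A direct substitution gives
\[
\eps^{-4/3} E_\eps(u_\eps) = \frac{\lambda^2\ell^3}{2\kappa^2} + \sigma\eps^{-1/3}\int_{\TT_\ell}|\nabla\chi_\eps|\,dx - \frac{2\lambda}{\kappa^2}\,m_\eps + 2\,Q_\eps .
\]
All terms on the right are nonnegative except the linear term $-\tfrac{2\lambda}{\kappa^2}m_\eps$, so the entire difficulty is to prevent $m_\eps$ from escaping to $+\infty$ against this favorable linear contribution.

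The heart of the argument is a quadratic-beats-linear estimate that exploits screening. Discarding the nonnegative perimeter term and invoking the uniform lower bound $G(x)\geq c>0$ on the screened kernel from \eqref{Gprops}, one has $Q_\eps \geq c\,\mu_\eps(\TT_\ell)^2 = c\,m_\eps^2$ since $\mu_\eps\geq 0$. Combining this with the energy bound \eqref{ZO.1}, I obtain, for all $\eps$ small,
\[
2c\,m_\eps^2 - \frac{2\lambda}{\kappa^2}\,m_\eps \;\leq\; \limsup_{\eps\to0}\eps^{-4/3}E_\eps(u_\eps) - \frac{\lambda^2\ell^3}{2\kappa^2} + 1 ,
\]
and since the left-hand side is a quadratic in $m_\eps$ with positive leading coefficient, this yields a uniform bound $m_\eps\leq M$. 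Feeding this back into the energy identity then bounds $Q_\eps$ uniformly (and also shows $\int_{\TT_\ell}|\nabla\chi_\eps| = O(\eps^{1/3})\to0$, which is not needed here but records that the minority phase has vanishing perimeter). I would emphasize that this positive lower bound on $G$, hence the quadratic coercivity in the total mass, is precisely what screening ($\kappa>0$) provides and what the unscreened kernel $G_0$ lacks; this is the structural reason the compactness holds without any neutrality constraint, and it is the one step whose mechanism one must not overlook — beyond it the argument is essentially bookkeeping.

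With $m_\eps$ and $Q_\eps$ controlled, compactness is routine. The measures $(\mu_\eps)$ form a bounded sequence in $\mathcal M^+(\TT_\ell)$, so by weak-$*$ compactness, along a subsequence $\mu_\eps\rightharpoonup\mu$ with $\mu\in\mathcal M^+(\TT_\ell)$ (positivity passes to the limit by testing against nonnegative $\varphi\in C(\TT_\ell)$). Applying Lemma \ref{lem-Gest} to each $\mu_\eps$, the potential $v_\eps$ defined by \eqref{def-veps} lies in $H^1(\TT_\ell)$ and satisfies $\int_{\TT_\ell}(|\nabla v_\eps|^2 + \kappa^2 v_\eps^2)\,dx = Q_\eps$, so $(v_\eps)$ is bounded in $H^1(\TT_\ell)$ and, along a further subsequence, $v_\eps\rightharpoonup v$ in $H^1(\TT_\ell)$. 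Passing to the limit in the weak formulation $\int_{\TT_\ell}(\nabla v_\eps\cdot\nabla\varphi + \kappa^2 v_\eps\varphi)\,dx = \int_{\TT_\ell}\varphi\,d\mu_\eps$ for $\varphi\in C^\infty(\TT_\ell)$ — using weak $H^1$ convergence on the left and weak-$*$ convergence of measures on the right — yields \eqref{PDE}. Finally, since $v\in H^1(\TT_\ell)$ solves \eqref{PDE}, the measure $\mu = -\Delta v + \kappa^2 v$ extends to a bounded functional on $H^1(\TT_\ell)$, so $\mu\in\mathcal M^+(\TT_\ell)\cap H^{-1}(\TT_\ell)$, completing the proof.
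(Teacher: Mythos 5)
Your proof is correct and follows essentially the same route as the paper's: the same rewriting of the energy in terms of $\mu_\eps$, the same use of the uniform lower bound $G\ge c$ from \eqref{Gprops} to make the quadratic Coulomb term dominate the linear mass term, and the same appeal to Lemma \ref{lem-Gest}(iii) to convert the bounded Coulomb energy into an $H^1$ bound on $v_\eps$ before passing to the limit in \eqref{def-veps}. Your emphasis on screening as the source of the coercivity is exactly the mechanism the paper relies on.
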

\begin{proof}
  Inserting \eqref{def-mueps} into \eqref{Eeps} and dropping the
  perimeter term, following the argument of \cite{m:cmp10} we arrive
  at (see also \eqref{Eepschi})
  \begin{align} \label{EEOm} %
    E_\eps(u_\eps) & \geq \frac{\lambda^2 \ell^3}{2\kappa^2} - \frac
                     {2\lambda}{\kappa^2} \int_{\TT_\ell} d
                     \mu_\eps(x) + 2 \int_{\TT_\ell}\int_{\TT_\ell}
                     G(x-y) d\mu_\eps(x) d\mu_\eps(y),
\end{align}
where we used \eqref{Gint} and \eqref{def-ueps} and took into account
the translational invariance of the problem in $\TT_\ell$. By
\eqref{Gprops} we get
\begin{align} 
  E_\eps(u_\eps) %
  &\geq - \frac {2\lambda}{\kappa^2} \mu_\eps(\TT_\ell) + 2c
    \mu_\eps^2(\TT_\ell),
\end{align}
where we again recall that $\mu_\eps$ is nonnegative by definition. It
then follows that
\begin{align} 
  \mu_\eps(\TT_\ell) < C,
\end{align}
for some constant $C > 0$ independent of $\eps$, which implies that
$\mu_\eps \rightharpoonup \mu$ for a subsequence. The above
considerations together with Lemma \ref{lem-Gest}(iii) and
\eqref{ZO.1} show that
\begin{align}
  \int_{\TT_\ell}
  \left( |\nabla v_\eps|^2 + \kappa^2 v_\eps^2 \right) dx =
  \int_{\TT_\ell}\int_{\TT_\ell} G(x-y) d\mu_\eps(x) d\mu_\eps(y) < C,
\end{align}
and upon extraction of a further subsequence we get
$v_\eps \rightharpoonup v$ in $H^1(\TT_\ell)$. Finally, \eqref{PDE}
follows by passing to the limit in \eqref{def-veps}.
\end{proof}


We now proceed to the main result of this section which establishes
the $\Gamma$-limit of the screened sharp interface energy, similar to
its two-dimensional analog in \cite[Theorem 1]{gms:arma13}.

\begin{theorem}[$\Gamma$-convergence of $E_\eps$] \label{teogamma} %
  \noindent As $\eps \to 0$ we have
  \begin{align}
    \label{EepsE0}
    \eps^{-4/3} E_\eps \stackrel{\Gamma}\rightharpoonup E_0,    
  \end{align}
  with respect to the weak convergence of measures.  More precisely,
  we have
  \begin{itemize} 
  \item[i)] Lower bound: Suppose that $(u_\eps) \in \mathcal A$ and
    let $\mu_\eps$ be defined as in \eqref{def-mueps}, and suppose
    that
    \begin{align} 
      \mu_\eps \rightharpoonup \mu \textrm{ in } \mathcal M(\TT_\ell), 
    \end{align}
    as $\eps \to 0$, for some $\mu \in
    \mathcal{M_\ell}^+(\TT_\ell)$. Then
    \begin{align} \label{Eliminf} %
      \liminf_{\varepsilon \to 0} \eps^{-4/3} E_\eps(u_\eps) \geq E_0(\mu).
    \end{align}

  \item[ii)] Upper bound: Given $\mu \in \mathcal{M}^+(\TT_\ell)$,
    there exists $(u_\eps) \in \mathcal A$ such that for the
    corresponding $\mu_\eps$ as in \eqref{def-mueps} we have
\begin{align} 
  \mu_\eps \rightharpoonup \mu \text{ in } \mathcal M(\TT_\ell), 
\end{align}
as $\eps \to 0$, and
\begin{align}
  \label{Elimsup}
  \limsup_{\varepsilon \to 0} \eps^{-4/3} 
  E_\eps(u_\eps) \leq E_0(\mu).  
\end{align}
\end{itemize}
\end{theorem}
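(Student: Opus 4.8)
The plan is to establish the liminf and limsup inequalities separately, starting from the exact rewriting \eqref{Eepschi} of $E_\eps$ in terms of the rescaled phase indicator $\chi_\eps=(1+u_\eps)/2$. Dividing \eqref{Eepschi} by $\eps^{4/3}$ and using $\int_{\TT_\ell}\chi_\eps\,dx=\eps^{2/3}\mu_\eps(\TT_\ell)$ together with $\int_{\TT_\ell}\!\int_{\TT_\ell}G\chi_\eps\chi_\eps=\eps^{4/3}\int_{\TT_\ell}\!\int_{\TT_\ell}G\,d\mu_\eps\,d\mu_\eps$, one obtains the identity
\[
  \eps^{-4/3}E_\eps(u_\eps)=\frac{\lambda^2\ell^3}{2\kappa^2}-\frac{2\lambda}{\kappa^2}\mu_\eps(\TT_\ell)+2\int_{\TT_\ell}\!\int_{\TT_\ell}G\,d\mu_\eps\,d\mu_\eps+\eps^{-1/3}\sigma\int_{\TT_\ell}|\nabla\chi_\eps|\,dx.
\]
Since $\mu_\eps,\mu\ge0$ and $\TT_\ell$ is compact, weak convergence tested against the constant $1$ gives $\mu_\eps(\TT_\ell)\to\mu(\TT_\ell)$, so the linear term converges to $-\tfrac{2\lambda}{\kappa^2}\mu(\TT_\ell)$. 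Comparing with \eqref{E0}, both inequalities reduce to analyzing the sum of the perimeter term and the Coulomb term, which must reproduce the interaction energy $2\int\!\int G\,d\mu\,d\mu$ of the limit together with the extra self-energy contribution $\tfrac{2\lambda_c}{\kappa^2}\mu(\TT_\ell)$ generated by the vanishingly small droplets.

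For the lower bound (i) I would pass to a subsequence realizing the liminf with finite energy, so that Theorem \ref{thm-Ecompact} applies and $v_\eps\rightharpoonup v$ in $H^1(\TT_\ell)$ with $-\Delta v+\kappa^2v=\mu$. By Lemma \ref{lem-Gest}(iii) the Coulomb term equals $2\int_{\TT_\ell}(|\nabla v_\eps|^2+\kappa^2v_\eps^2)\,dx$; writing $v_\eps=v+w_\eps$ with $w_\eps\rightharpoonup0$ and expanding, the cross terms vanish and the macroscopic part $2\int(|\nabla v|^2+\kappa^2v^2)=2\int\!\int G\,d\mu\,d\mu$ splits off, leaving the nonnegative fluctuation energy $2\int(|\nabla w_\eps|^2+\kappa^2w_\eps^2)$. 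It remains to show that this fluctuation energy together with $\eps^{-1/3}\sigma\int|\nabla\chi_\eps|$ is bounded below by $\tfrac{2\lambda_c}{\kappa^2}\mu(\TT_\ell)+o(1)$. This is where the two-scale structure \eqref{chitil}--\eqref{Eepstil2} enters: passing to $\tilde\chi_\eps$ the droplets become order-one sets, the screened periodic kernel $G_\eps$ of \eqref{Gsumeps} converges to the bare whole-space Coulomb kernel (its screening length diverges and its periodic images recede), and localizing to the individual connected components one bounds their perimeter plus Coulomb self-energy from below by $f^*$ times their mass, by the very definition \eqref{f*} of $f^*$. Undoing the scaling and invoking $\lambda_c=2^{-1/3}\sigma^{2/3}\kappa^2f^*$ from \eqref{lamc} produces exactly the constant $\tfrac{2\lambda_c}{\kappa^2}$, and summing over components yields \eqref{Eliminf}.

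For the upper bound (ii) I may assume $E_0(\mu)<+\infty$, i.e.\ $\mu\in H^{-1}(\TT_\ell)$, and, by a density and diagonalization argument using the continuity of $E_0$ under mollification, it suffices to treat $\mu$ with a smooth bounded density $\rho$. I would construct $u_\eps$ by partitioning $\TT_\ell$ into cubes of a mesoscopic side $h_\eps\to0$ and filling each cube with the appropriate number of disjoint translated copies of the optimal Gamow set $F^*$ from Theorem \ref{t:gamow}, each rescaled to the physical droplet size through \eqref{chitil}, so that the resulting $\mu_\eps$ carries mass $\rho\,h_\eps^3$ per cube and hence $\mu_\eps\rightharpoonup\mu$. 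Since every droplet is an exact rescaled copy of $F^*$, its perimeter plus self-energy equals $f^*$ times its mass up to $o(1)$, producing the term $\tfrac{2\lambda_c}{\kappa^2}\mu(\TT_\ell)$; because the droplets are well separated on the scale of $h_\eps$, on which $G$ is essentially constant, their mutual interaction converges to $2\int\!\int G\,d\mu\,d\mu$. Together with the linear term this gives $\limsup_{\eps\to0}\eps^{-4/3}E_\eps(u_\eps)\le E_0(\mu)$.

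The main obstacle is the self-energy localization in the lower bound: one must extract the $f^*$-per-mass contribution of the droplets from the fluctuation energy $2\int(|\nabla w_\eps|^2+\kappa^2w_\eps^2)$ without double-counting the interaction energy already captured by the weak $H^1$ limit of $v_\eps$, and simultaneously justify replacing the screened periodic kernel $G_\eps$ by the bare Coulomb kernel on each connected component. This is precisely the point where uniform control on the connected components (in the spirit of Theorem \ref{t:diam} and of the whole-space analysis in \cite{kmn:cmp16}) is needed, and where the scale separation between droplet size, droplet spacing, and domain size makes the microscopic self-energy and the macroscopic interaction decouple.
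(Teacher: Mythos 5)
Your overall architecture matches the paper's: split the energy into a macroscopic interaction part that passes to the limit under weak convergence of measures and a microscopic part (perimeter plus droplet self-energy) to be bounded below by $f^*$ per unit mass via \eqref{f*} and \eqref{lamc}, and build the recovery sequence from well-separated rescaled copies of the Gamow minimizer distributed according to a mollified $\mu$. The upper bound as you sketch it is essentially the paper's (which defers the construction to \cite{kmn:cmp16}), and your bookkeeping of the constant, linear and Coulomb terms via \eqref{Eepschi} is correct.

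The genuine gap is in the lower bound, and it is exactly the step you yourself flag as ``the main obstacle'' without resolving it. You split the Coulomb term through the potential, $v_\eps=v+w_\eps$ with $w_\eps\rightharpoonup0$, and then need
\begin{align*}
\liminf_{\eps\to0}\Bigl[\,\eps^{-1/3}\sigma\int_{\TT_\ell}|\nabla\chi_\eps|\,dx+2\int_{\TT_\ell}\bigl(|\nabla w_\eps|^2+\kappa^2w_\eps^2\bigr)\,dx\Bigr]\;\geq\;\frac{2\lambda_c}{\kappa^2}\,\mu(\TT_\ell).
\end{align*}
But the fluctuation energy is the self-interaction of the \emph{signed} measure $\mu_\eps-\mu$, not the near-diagonal self-energy of $\mu_\eps$; to extract the latter you must control the cross terms $\int\!\int_{|x-y|<\rho}G\,d\mu_\eps\,d\mu$ and the far-field part of the fluctuation (which is not sign-definite once the kernel is truncated), for a general $\mu\in H^{-1}(\TT_\ell)$ that need not have a bounded density. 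None of this is addressed. Moreover, the tool you invoke to localize the self-energy, Theorem \ref{t:diam}, is unavailable here: the $\Gamma$-liminf must hold for \emph{arbitrary} sequences while that theorem concerns minimizers only, and its proof uses Corollary \ref{c:min}, which is itself a consequence of the theorem you are proving, so the argument would be circular. The paper avoids all of this by cutting the \emph{kernel} rather than the potential: writing $G=G_\rho+H_\rho$ as in \eqref{GHrho}, the $G_\rho$-interaction is a continuous bilinear form under weak convergence of measures (and $\rho\to0$ recovers the full interaction), while the near-field part $H_\rho\geq0$ is supported near the diagonal and, after the rescaling \eqref{chitil}, dominates $(1-\rho)\Gamma^\#_{\rho_0}$; this is precisely the setting in which the localization arguments of \cite{kmn:cmp16} apply to arbitrary configurations and yield the $f^*$-per-mass bound. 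If you wish to keep your route you would have to reprove that localization for the fluctuation energy; switching to the kernel decomposition is the cleaner fix.
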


\begin{proof}
  Assume first that
  $\mu \in \mathcal M^+(\TT_\ell) \cap H^{-1}(\TT_\ell)$, so that
  $E_0(\mu) < +\infty$. As in the proof of Propositions 5.1 and 5.2 in
  \cite{kmn:cmp16}, we separate the contributions of the near-field
  and far-field interaction, i.e. for $0 < \rho \leq \frac14$ we write
  \begin{align} \label{GHrho} %
    G_\rho(x) = \eta_\rho(x) G(x), \qquad H_\rho(x) := G(x) - G_\rho(x),
  \end{align}
  where $\eta_\rho(x)$ is a smooth cutoff function depending on $|x|$
  which is monotonically increasing from 0 to 1 as $|x|$ goes from 0
  to $\rho$, with $\eta_\rho(x) = 0$ for all $|x| < \frac12 \rho$ and
  $\eta_\rho(x) = 1$ for all $|x| > \rho$.  With the help of
  \eqref{Eepschi}, for any $u_\eps \in \mathcal A$ we decompose the
  energy as $E_\eps = E_\eps^{(1)} + E_\eps^{(2)}$, where
  \begin{align}     \label{Eeps12} %
    \begin{aligned}
      \eps^{-4/3} E_\eps^{(1)} (u_\eps) %
      & = \frac{\lambda^2 \ell^3}{2\kappa^2} - \frac
      {2\lambda}{\kappa^2} \int_{\TT_\ell} d \mu_\eps(x) + 2
      \int_{\TT_\ell} \int_{\TT_\ell} G_\rho(x-y) \, d \mu_\eps(x) \,
      d \mu_\eps(y), \\
      \eps^{-4/3} E_\eps^{(2)} (u_\eps) %
      & = \eps^{-1/3} \sigma \int_{\TT_\ell} |\nabla \chi_\eps| \, dx
      + 2 \eps^{-4/3} \int_{\TT_\ell} \int_{\TT_\ell}
      H_\rho(x-y) \chi_\eps(x) \chi_\eps(y) \, dx \, dy,
    \end{aligned}
  \end{align}
  where $\chi_\eps$ is as in \eqref{chiu} with $u$ replaced with
  $u_\eps$.  The term $E_\eps^{(1)}$ is continuous with respect to the
  weak convergence of measures, hence
  \begin{align}
    \int_{\TT_\ell} \int_{\TT_\ell} G_\rho(x-y) \, d \mu_\eps(x) \,
    d \mu_\eps(y) \to \int_{\TT_\ell} \int_{\TT_\ell} G_\rho(x-y) \, d \mu(x) \,
    d \mu(y) \qquad \text{as} \ \eps \to 0.
  \end{align}
  The proof of the lower bound for $E_\eps^{(2)}$ follows with similar
  arguments as in \cite{kmn:cmp16}. After the rescaling in
  \eqref{chitil}, one can write
  \begin{align}
    \eps^{-4/3} E_\eps^{(2)} (u_\eps) = \left( {\eps^{1/3} \sigma^{5/3}
    \over 4^{2/3}} \right) \left[ \int_{\TT_{\ell_\eps}} |\nabla
    \tilde \chi_\eps| dx + \frac12 \int_{\TT_{\ell_\eps}} \int_{\TT_{\ell_\eps}}
    \widetilde H_\rho^\eps(x - y) \tilde \chi_\eps(x) \tilde \chi_\eps(y)
    \, dx \, dy \right],
  \end{align}
  where $\tilde \chi_\eps(x) := \chi_\eps(x \ell / \ell_\eps)$ and 
  \begin{align}
    \label{Htil}
    \widetilde H_\rho^\eps(x) := (1 - \eta_\rho(x \ell / \ell_\eps))
    G_\eps(x). 
  \end{align}
  Observe that by \eqref{Gsumeps} and monotonicity of $\eta_\rho(x)$
  in $|x|$ we have
  \begin{align*}
    \widetilde H_\rho^\eps(x) \geq  (1 - \rho) \Gamma_{\rho_0}^\#(x),
  \end{align*}
  where $\Gamma_{\rho_0}^\#(x) := (1 - \eta_{\rho_0}(x)) \Gamma^\#(x)$
  and $\Gamma^\#(x) := \frac 1{4\pi |x|}$ is the restriction of the
  Newton potential on the torus, for any $\rho_0 > 0$ and all $\eps$
  small enough depending only on $\kappa$, $\sigma$ and $\rho_0$. The
  rest of the proof follows exactly as in \cite{kmn:cmp16}.

  Finally, if $\mu \not\in H^{-1}(\TT_\ell)$, then
  $E_0(\mu) = +\infty$ and the upper bound is trivial, while the lower
  bound follows via a contradiction argument from the compactness
  established in Theorem \ref{thm-Ecompact}.
\end{proof}

As a direct consequence of Theorems \ref{thm-Ecompact} and
\ref{teogamma}, we have the following characterization of the
minimizers of the sharp interface energy in the limit $\eps \to 0$.

\begin{corollary}
  \label{c:min}
  Let $(u_\eps) \in \mathcal A$ be minimizers of $E_\eps$. Let
  $\mu_\eps$ be defined in \eqref{def-mueps} and let $v_\eps$ be the
  solution of \eqref{def-veps} respectively. Then as $\eps \to 0$, we
  have
\begin{align} 
  \mu_\eps \rightharpoonup \bar\mu \textrm{ in } \mathcal M(\TT_\ell),
  \qquad v_\eps \rightharpoonup \bar v \textrm{ in } H^1(\TT_\ell), 
\end{align}
where $\bar\mu$ and $\bar v$ are as in Proposition \ref{p:minE0}.
\end{corollary}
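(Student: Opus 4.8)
The plan is to deduce this statement from the fundamental theorem of $\Gamma$-convergence, combining the equicoercivity in Theorem \ref{thm-Ecompact}, the $\Gamma$-convergence in Theorem \ref{teogamma}, and the uniqueness of the minimizer of the limit energy in Proposition \ref{p:minE0}. First I would establish a uniform upper bound on the rescaled energies of the minimizers. Applying the upper bound (recovery sequence) part of Theorem \ref{teogamma} to the limit minimizer $\bar\mu$ from Proposition \ref{p:minE0}, I obtain a sequence $(w_\eps) \in \mathcal A$ with $\limsup_{\eps \to 0} \eps^{-4/3} E_\eps(w_\eps) \leq E_0(\bar\mu) < +\infty$. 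Since each $u_\eps$ minimizes $E_\eps$, we have $E_\eps(u_\eps) \leq E_\eps(w_\eps)$, whence $\limsup_{\eps \to 0} \eps^{-4/3} E_\eps(u_\eps) \leq E_0(\bar\mu) < +\infty$. In particular, the hypothesis \eqref{ZO.1} of Theorem \ref{thm-Ecompact} is met, so along a subsequence $\mu_\eps \rightharpoonup \mu$ in $\mathcal M(\TT_\ell)$ and $v_\eps \rightharpoonup v$ in $H^1(\TT_\ell)$ for some $\mu \in \mathcal M^+(\TT_\ell) \cap H^{-1}(\TT_\ell)$ and $v \in H^1(\TT_\ell)$ with $-\Delta v + \kappa^2 v = \mu$.

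Next I would close the inequality from below. By the lower bound in Theorem \ref{teogamma}, $\liminf_{\eps \to 0} \eps^{-4/3} E_\eps(u_\eps) \geq E_0(\mu)$. Chaining this with the upper bound from the previous step gives $E_0(\mu) \leq \liminf_{\eps \to 0} \eps^{-4/3} E_\eps(u_\eps) \leq \limsup_{\eps \to 0} \eps^{-4/3} E_\eps(u_\eps) \leq E_0(\bar\mu)$. Since $\bar\mu$ minimizes $E_0$ by Proposition \ref{p:minE0}, the reverse inequality $E_0(\bar\mu) \leq E_0(\mu)$ also holds, so all terms coincide and $\mu$ is itself a minimizer of $E_0$. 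The uniqueness asserted in Proposition \ref{p:minE0} then forces $\mu = \bar\mu$; moreover the chain of equalities yields $\eps^{-4/3} E_\eps(u_\eps) \to E_0(\bar\mu)$ along the subsequence, which incidentally supplies the energy convergence needed for Theorem \ref{t:main}(iii). Because the limit $\bar\mu$ is independent of the chosen subsequence, a standard Urysohn argument upgrades subsequential convergence to convergence of the full sequence, so $\mu_\eps \rightharpoonup \bar\mu$ in $\mathcal M(\TT_\ell)$.

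Finally, for the potentials I would pass to the limit in the defining equation \eqref{def-veps}. The limit $v$ obtained above solves $-\Delta v + \kappa^2 v = \mu = \bar\mu$ weakly, and since $-\Delta + \kappa^2$ is invertible on $H^1(\TT_\ell)$ for $\kappa > 0$ this solution is unique, forcing $v = \bar v$ with $\bar v$ as in Proposition \ref{p:minE0}. The same subsequence-independence argument then gives $v_\eps \rightharpoonup \bar v$ in $H^1(\TT_\ell)$ for the full sequence. I do not expect any serious obstacle here: the statement is the standard consequence of $\Gamma$-convergence together with equicoercivity, and the only point requiring care is the propagation of the subsequential limits to the entire sequence, which is guaranteed by the uniqueness of the minimizer $\bar\mu$.
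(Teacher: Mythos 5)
Your argument is correct and is precisely what the paper intends: the corollary is stated there without proof as ``a direct consequence of Theorems \ref{thm-Ecompact} and \ref{teogamma}'', and your chain (recovery sequence for $\bar\mu$ gives the uniform energy bound, equicoercivity gives subsequential limits, the lower bound plus uniqueness from Proposition \ref{p:minE0} identifies the limit, and subsequence-independence upgrades to the full sequence) is exactly that standard deduction. No gaps.
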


We note that for $\lambda \gg \lambda_c$ the minimum energy per unit
volume for minimizers in Proposition \ref{c:min} approaches
asymptotically to that of the unscreened sharp interface energy
studied in \cite{kmn:cmp16}, indicating that the presence of an
additional screening does not affect the limit behavior of the energy
at higher densities than those appearing in \eqref{def-ueps}. We would
thus expect that the same result would still hold for the sharp
interface energy even for $1 + \bar u_\eps = o(1)$ as $\eps \to 0$,
consistently with a recent result for the sharp interface energy
without screening \cite{emmert18}.


We conclude by proving an analog of \cite[Theorem 3.6]{kmn:cmp16}
that provides uniform bounds on the diameter of the connected
components of minimizers of $E_\eps$ as $\eps \to 0$, and convergence
of most of the connected components to minimizers of Gamow's model per
unit mass.

\begin{theorem}[Minimizers: droplet structure] \label{t:diam} 
  For $\lambda > 0$, let $(u_\eps) \in \mathcal A$ be regular
  representatives of minimizers of $E_\eps$, and assume that the sets 
  $\{u_\eps = +1\}$ are non-empty for $\eps$ sufficiently small.
  Let $N_\eps$ be the
  number of connected components of the set
  $\{u_\eps = +1\}$, let $\chi_{\eps,k} \in BV(\R^3; \{0, 1\})$ be the
  characteristic function of the $k$-th connected component of the
  support of the periodic extension of $\{u_\eps = +1\}$ to the whole
  of $\R^3$ modulo translations in $\mathbb Z^3$, and let
  $x_{\eps,k} \in \mathrm{supp}(\chi_{\eps,k})$.  Then there exists
  $\eps_0 > 0$ such that the following properties hold:
  \begin{enumerate}
  \item[i)] There exist constants $C, c > 0$ depending only on
    $\sigma$, $\kappa$, $\lambda$ and $\ell$ such that, for
    all $\eps \leq \eps_0$ we have
  \begin{align}
    \label{ccDueps}
    0 < v_\eps \leq C \qquad \text{and} \qquad
    \int_{\R^3} \chi_{\eps,k} \, dx \geq c\, \eps,
  \end{align}
 where $v_\eps$ solves \eqref{def-veps}. Moreover we have
  \begin{align}
    \label{cCNeps}
    \mathrm{supp} (\chi_{\eps,k}) \subseteq B_{C
    \eps^{1/3}}(x_{\eps,k}) .
  \end{align}
  \item[ii)]  If $\lambda>\lambda_c$, where 
 where $\lambda_c$ is given by  \eqref{lamc}, 
 there exist constants $C, c > 0$ as above such that, for all
  $\eps \leq \eps_0$ we have
  \begin{align}
    \label{cCNepslamlamc}
    c  (\lambda - \lambda_c)
    \eps^{-1/3} \leq N_\eps \leq C 
    (\lambda - \lambda_c) \eps^{-1/3} .
  \end{align}
 Moreover, there exists $\widetilde N_\eps \leq N_\eps$ with
  $\widetilde N_\eps / N_\eps \to 1$ as $\eps \to 0$ and a subsequence
  $\eps_n \to 0$ such that for every $k_n \leq \widetilde N_{\eps_n}$
  the following holds: After possibly relabeling the connected
  components, we have
  \begin{align}
   \tilde \chi_n \to \tilde \chi &&\text{in $L^1(\R^3)$}, 
  \end{align}
  where
  $\tilde \chi_n (x) := \chi_{\eps_n,k_n}(\eps_n^{1/3} (x +
  x_{\eps_n,k_n}))$, and
  $\tilde \chi \in \widetilde{\mathcal A}_\infty$ is a minimizer of
  the right-hand side of \eqref{f*}.
  \end{enumerate}
\end{theorem}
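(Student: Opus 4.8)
The plan is to transfer the analysis to the rescaled picture \eqref{chitil}–\eqref{Eepstil2}, where the energy $\widetilde E_{\ell_\eps}$ lives on the large torus $\TT_{\ell_\eps}$ with $\ell_\eps \sim \eps^{-1/3}$ and the screening parameter $4^{-2/3}\eps^{2/3}\sigma^{2/3}\kappa^2$ is small, so that $G_\eps$ is a mild perturbation of the whole-space Newtonian kernel $\Gamma^\#$. The whole analysis should parallel \cite[Theorem 3.6]{kmn:cmp16}, with the only genuinely new ingredient being control of the screening terms, which I expect to be harmless because they are lower order. First I would establish the potential bound $0 < v_\eps \leq C$ in \eqref{ccDueps}: positivity is immediate from $\mu_\eps \geq 0$ and the maximum principle applied to \eqref{def-veps}, while the upper bound follows from the uniform mass bound $\mu_\eps(\TT_\ell) < C$ (from Theorem \ref{thm-Ecompact}) together with the Euler–Lagrange characterization of minimizers — a droplet sitting where $v_\eps$ is too large could be removed to decrease the energy, which caps $\|v_\eps\|_\infty$.

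Next I would prove the two geometric bounds \eqref{cCNeps} and the mass lower bound in \eqref{ccDueps} for each connected component. The key is a \emph{density estimate} for minimizers: each connected component $\chi_{\eps,k}$ must, after rescaling by $\eps^{1/3}$, be energetically competitive with a minimizer of Gamow's problem \eqref{f*}. Writing $m_{\eps,k} := \int_{\R^3}\chi_{\eps,k}\,dx$ for its mass, the near-field (perimeter plus self-Coulomb) energy of a component of mass $m$ is bounded below by $f^* m$ up to the screening correction, so a component that is too small or too spread out pays a perimeter-to-volume penalty that contradicts minimality. Concretely, one shows that if some component had mass $o(\eps)$ it could be absorbed into the background (decreasing the energy via the gain term $-\tfrac{2\lambda}{\kappa^2}\int d\mu_\eps$ being outweighed by the perimeter saving), yielding $m_{\eps,k}\geq c\eps$; and an isoperimetric/concentration argument bounds the diameter by $C\eps^{1/3}$, giving \eqref{cCNeps}. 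These are precisely the uniform estimates alluded to after Theorem \ref{teogamma}, and their proofs should be verbatim adaptations of \cite{kmn:cmp16} once the screened kernel is replaced by $(1-\rho)\Gamma^\#_{\rho_0}$ as in the $\Gamma$-convergence proof above.

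For part ii), the counting bound \eqref{cCNepslamlamc} follows by combining the per-component mass bounds $c\eps \leq m_{\eps,k}\leq C\eps$ (the upper mass bound being a consequence of the diameter bound) with the total mass asymptotics $\mu_\eps(\TT_\ell)\to \bar\mu(\TT_\ell)=\tfrac12(\lambda-\lambda_c)\ell^3$ from Corollary \ref{c:min}: summing $m_{\eps,k}$ over the $N_\eps$ components and matching with $\eps^{2/3}\mu_\eps(\TT_\ell)$ forces $N_\eps\sim(\lambda-\lambda_c)\eps^{-1/3}$. Finally, for the $L^1$-convergence of the rescaled components $\tilde\chi_n$ to a Gamow minimizer, I would argue that the total energy \eqref{Eepstil2} asymptotically equals $N_\eps$ times $f^*\eps$ (the limit energy \eqref{E0min} decomposes this way), so the average per-component deficit from $f^*$ tends to zero; hence all but a vanishing fraction $\widetilde N_\eps/N_\eps\to 1$ of components are \emph{almost minimizers} of \eqref{f*}, and a concentration-compactness argument in $\widetilde{\mathcal A}_\infty$ — using the compactness result behind Theorem \ref{t:gamow} and ruling out splitting/vanishing via the strict subadditivity of $f^*m$ — upgrades almost-minimality to strong $L^1$ convergence along a subsequence. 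The main obstacle I anticipate is this last step: controlling the screening perturbation uniformly over the growing number of components and excluding loss of mass to infinity in the blow-up limit, which requires the quantitative diameter bound \eqref{cCNeps} together with a careful accounting that the screening contributes only $o(1)$ to each component's rescaled energy.
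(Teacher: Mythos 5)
Your overall architecture matches the paper's: rescale via \eqref{chitil} so that $G_\eps$ is a perturbation of the Newtonian kernel, adapt the droplet estimates of \cite[Theorem 3.6]{kmn:cmp16} with the screening treated as a harmless lower-order term, deduce \eqref{cCNepslamlamc} by combining the per-component volume bounds with the total mass limit from Corollary \ref{c:min}, and obtain the convergence to Gamow minimizers from the fact that all but a vanishing fraction of components must be almost-minimizers of \eqref{f*}, with the diameter bound precluding loss of mass in the blow-up. Your removal argument for the volume lower bound $\int_{\R^3}\chi_{\eps,k}\,dx \geq c\eps$ (isoperimetric perimeter $\gtrsim m^{2/3}$ beats the linear gain $\lesssim \eps^{-1/3} m$, using positivity of $G$) is sound and essentially what the paper invokes.

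The genuine gap is in your first step, the uniform bound $v_\eps \leq C$. You assert it follows from the total mass bound $\mu_\eps(\TT_\ell) < C$ together with an Euler--Lagrange/removal heuristic, but the total mass bound gives no pointwise control whatsoever on the potential (a concentrating sequence of measures with bounded mass has unbounded potential), and the removal argument by itself does not close: to get a contradiction from ``$v_\eps$ is large somewhere, so cut out a ball there,'' you must know \emph{both} that $v_F$ stays comparable to its maximum on a ball of radius of order one around the maximum point (in the paper this is \eqref{sto}, which requires the preliminary bound $v_F \leq C\eps^{-2/9}$ of \eqref{sti} and the pointwise gradient estimate of \cite[Lemma 6.5]{kmn:cmp16}) \emph{and} that the set $F$ has volume density bounded below in that ball (the density estimate \eqref{sta}), so that the cut actually removes a definite amount of interaction energy at only $O(1)$ perimeter cost. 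In the paper the logical order is therefore: preliminary potential bound $\to$ gradient estimate $\to$ density estimate $\to$ uniform potential bound $\to$ diameter bound; you place the uniform potential bound first and introduce the density estimate afterwards for a different purpose, which inverts the dependency and leaves the key step unjustified. A small additional slip: ``strict subadditivity of $f^*m$'' is vacuous for a linear function of $m$; what rules out dichotomy in the blow-up is the uniform diameter bound \eqref{cCNeps} (equivalently, the binding inequality for the mass-constrained minimum), which you do eventually invoke.
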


\begin{proof}
  The proof can be obtained as in \cite[Theorem 3.6]{kmn:cmp16}, with
  some simplifications due to the absence of a volume constraint. We
  outline the necessary modifications below. As stated above, the
  constants in the estimates below depend on $\sigma$, $\kappa$,
  $\lambda$ and $\ell$, and may change from line to line.

  For $\tilde u_\eps(x) := u_\eps (\ell x / \ell_\eps)$, we define
  $F \subset \TT_{\ell_\eps}$ to be the set $\{\tilde u_\eps = +1\}$,
  which by our assumption 
  is non-empty for $\eps$ sufficiently small. Then we can write
  $v_\eps(x) = (\sigma / 4)^{2/3} v_F(\ell x / \ell_\eps)$, where
  $v_F(x) := \int_{\TT_{\ell_\eps}} G_\eps(x - y) \chi_F(y) dy$, and
  $G_\eps$ is defined in \eqref{Gsumeps}. The first step in the proof
  is to obtain an $L^\infty$-bound on the potential $v_F$ analogous to
  the one in \cite[Lemma 6.3]{kmn:cmp16}:
  \begin{align}\label{sti}
    0< v_F \le  C \eps^{-2/9}.
  \end{align}
  Observe that by strict positivity of $G_\eps$ we clearly have
  $v_F > 0$. On the other hand, the upper bound follows exactly as in
  \cite[Lemma 6.3]{kmn:cmp16}, due to the fact that
  $G_\eps(x) \leq C / |x|$ for some $C > 0$, since
  \begin{align}
    G_\eps(x) = {\ell \over \ell_\eps} G\left( { \ell x \over
    \ell_\eps} \right) = {1 \over 4 \pi} \sum_{\mathbf n \in \mathbb
    Z^3}  {e^{-\kappa \ell |(x / \ell_\eps) - \mathbf n \ell|} \over
    |x - \mathbf n \ell|},  
  \end{align}
  in view of \eqref{Gsum}. 
  
  Next, we need to estimate the gradient of $v_F$ pointwise in terms
  $v_F$ itself, as in \cite[Lemma 6.5]{kmn:cmp16}, which relies on
  \cite[Eq. (6.15)]{kmn:cmp16}.  It is easy to see that the latter
  estimate still holds in the present setting, with the constants
  depending on $\kappa$ and $\ell$. The proof then follows as in
  \cite{kmn:cmp16}, with a few simplifications due to positivity of
  $G$. Also, since $v_F$ satisfies
  \begin{align}
    -\Delta v_F + \left( { \eps \sigma \over 4} \right)^{2/3} \kappa^2
    v_F = \left( { \sigma \over 4} \right)^{2/3} \chi_F \qquad
    \text{in} \ \TT_{\ell_\eps},
  \end{align}
  by positivity of $v_F$ we have that $v_F$ is subharmonic outside
  $\overline F$. Thus, $v_F$ attains its global maximum in
  $\TT_{\ell_\eps}$ for some $\bar x \in \overline F$, and the analog
  of \cite[Eq. (6.19)]{kmn:cmp16} holds true:
  \begin{align}\label{sto}
    v_F(x) \geq \frac34 v_F(\bar x) - C \qquad \text{for all} \ x \in
    B_r(\bar x),
  \end{align}
  for some $C > 0$ and $r > 0$. 
  
  Proceeding as in \cite[Lemma 6.7 and Proposition 6.2]{kmn:cmp16}, we
  establish a lower density estimate for $F$: Given
  $x_0\in \overline F$ and letting $F_0$ be the connected component of
  $F$ containing $x_0$, we have
   \begin{align}\label{sta}
     |F_0\cap B_r(x_0)|\ge c r^3\qquad \text{for all } r\le C
     \min\big(1,\| v_F\|_\infty^{-1}\big) 
     \le C \eps^{2/9},
  \end{align}
  for some $c,\,C>0$, where the last inequality follows from
  \eqref{sti}.  The assertion in \eqref{ccDueps} then follows as in
  \cite[Theorem 6.9]{kmn:cmp16} from \eqref{sti}, \eqref{sto} and
  \eqref{sta}. The idea of the proof in \cite{kmn:cmp16} is to find a
  suitable competitor $F'$ which is obtained by cutting from $F$ a
  ball of radius independent of $\eps$, centered at the point where
  the potential $v_F$ attains its maximum.  Compared to
  \cite{kmn:cmp16}, the proof here is simpler since we don't have a
  volume constraint, so that we can allow competitors with smaller
  volume than $F$.  Arguing by contradiction, if the maximum of $v_F$
  is large, then necessarily the density of $F$ in the ball has to be
  small, otherwise the energy of $F'$ would be less that the energy of
  $F$. However, this contradicts the density estimate in \eqref{sta}.
%
  Finally, exactly as in \cite[Lemma 6.11]{kmn:cmp16}, the bound on
  the potential and the density estimate \eqref{sta} also imply the
  diameter bound
\begin{align}\label{diam}
   \text{diam}(F_0)\le C,
  \end{align}
for some constant $C>0$, which gives \eqref{cCNeps}.
This concludes the proof of part $i)$.
 
 \smallskip
 
 The proof of part $ii)$ follows as in the proof of \cite[Theorem
 3.6]{kmn:cmp16}, with the exception that the estimate on $N_\eps$ in
 \eqref{cCNepslamlamc} now follows from \eqref{ccDueps} and the fact
 that, recalling Corollary \ref{c:min},
\begin{align}
  \lim_{\eps\to 0} \int_{\TT_\ell} d\mu_\eps = \bar\mu(\TT_\ell) =
  \frac 12 (\lambda-\lambda_c), 
\end{align}
where $\bar \mu$ is as in Proposition \ref{p:minE0}.
\end{proof}

The results obtained in Theorem \ref{t:diam} allow us to establish a
sharp transition from trivial to non-trivial minimizers at the level
of the sharp interface energy near $\lambda = \lambda_c$ for all
$\eps \ll 1$.

\begin{corollary}\label{t:triv}
  There exists $\eps_0 = \eps_0(\sigma,\kappa,\lambda, \ell) > 0$ such
  that if $\lambda_c$ is given by \eqref{lamc}, then:
\begin{enumerate}
\item[i)] For any $\lambda < \lambda_c$ and $\eps<\eps_0$ we have
  that $u= -1$ is the unique minimizer of $E_\eps$ in $\mathcal A$.
\item[ii)] For $\lambda>\lambda_c$ and $\eps < \eps_0$ we have that
  $u= -1$ is not a minimizer of $E_\eps$ in $\mathcal A$.
\end{enumerate}
\end{corollary}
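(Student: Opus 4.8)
The plan is to deduce both parts of Corollary \ref{t:triv} from the quantitative droplet structure theorem just established. The key observation is that Theorem \ref{t:diam} gives, for $\eps$ small enough and whenever the minority phase is non-empty, a lower bound $\int_{\R^3} \chi_{\eps,k}\,dx \geq c\,\eps$ on the volume of each connected component of the minority phase (the second estimate in \eqref{ccDueps}). This rules out infinitesimally small droplets and thus forces any non-trivial minimizer to carry a definite amount of mass. The strategy is therefore to compare the energy of such a putative non-trivial minimizer with that of the trivial competitor $u \equiv -1$, for which $\chi \equiv 0$ and, by \eqref{Eepschi}, one has $E_\eps(-1) = \eps^{4/3}\lambda^2 \ell^3 / (2\kappa^2)$.

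First I would prove part (i). Suppose for contradiction that for a sequence $\eps \to 0$ with $\lambda < \lambda_c$ the minimizer $u_\eps$ is not identically $-1$, so the sets $\{u_\eps = +1\}$ are non-empty and Theorem \ref{t:diam} applies. By Corollary \ref{c:min} the associated measures satisfy $\mu_\eps \rightharpoonup \bar\mu = 0$ (since $\lambda < \lambda_c$), and the $\Gamma$-convergence lower bound \eqref{Eliminf} together with Proposition \ref{p:minE0} gives
\begin{align}
  \liminf_{\eps \to 0} \eps^{-4/3} E_\eps(u_\eps) \geq E_0(0) = \frac{\lambda^2 \ell^3}{2\kappa^2}.
\end{align}
On the other hand, by minimality $E_\eps(u_\eps) \leq E_\eps(-1) = \eps^{4/3}\lambda^2 \ell^3 / (2\kappa^2)$, so the limit energy equals exactly $\lambda^2\ell^3/(2\kappa^2)$ and the perimeter contribution must vanish in the limit. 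The crux is then to exploit the rigidity coming from \eqref{ccDueps}: the volume lower bound $c\,\eps$ per component combined with the diameter bound \eqref{cCNeps} yields a strictly positive lower bound on the perimeter of any single non-empty component, forcing the $\eps^{-1/3}\sigma\int |\nabla\chi_\eps|\,dx$ contribution to exceed the deficit that can be recovered from the negative bulk term $-2\lambda\kappa^{-2}\eps^{-2/3}\int\chi_\eps\,dx$ when $\lambda < \lambda_c$. This is precisely the whole-space comparison: after the rescaling \eqref{chitil}, a single droplet of mass $m$ contributes, to leading order, $\eps^{5/3}\sigma^{5/3}4^{-2/3}\widetilde E_\infty(\tilde\chi) - \eps^{5/3}\sigma\lambda(2\kappa^2)^{-1} m$, and by the definition of $f^*$ in \eqref{f*} and of $\lambda_c$ in \eqref{lamc}, the minimal per-unit-mass value of this expression is strictly positive exactly when $\lambda < \lambda_c$. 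Hence removing any droplet strictly lowers the energy, contradicting minimality, so $u_\eps \equiv -1$ for all $\eps < \eps_0$.

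For part (ii), with $\lambda > \lambda_c$, I would argue directly by exhibiting a competitor with lower energy than $u \equiv -1$. Taking a single small droplet shaped like the rescaled Gamow minimizer $F^*$ from Theorem \ref{t:gamow}, placed on the torus at scale $\eps^{1/3}$, and computing its energy via \eqref{Eepstil2}, the leading-order energy relative to the trivial state is $\eps^{5/3}\sigma^{5/3}4^{-2/3}$ times the per-unit-mass excess, which by construction is negative when $\lambda > \lambda_c$; thus $E_\eps$ evaluated on this competitor is strictly below $E_\eps(-1)$ for all $\eps$ sufficiently small, showing $u \equiv -1$ cannot be a minimizer.

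The main obstacle is part (i): one must upgrade the asymptotic $\Gamma$-convergence statement, which only controls energies in the limit, into a genuine \emph{non-existence} result valid for every $\eps < \eps_0$. The decisive ingredient is the uniform volume and diameter control from \eqref{ccDueps}--\eqref{cCNeps}, which quantizes the minority phase into well-separated droplets of mass bounded below, so that the cost of a single droplet is bounded below independently of the configuration; this localizes the problem and lets the whole-space comparison through $f^*$ and $\lambda_c$ be applied droplet by droplet. Making this surgical argument rigorous — that removing one droplet from a minimizer strictly decreases the energy when $\lambda < \lambda_c$, using that the near-field self-energy dominates the far-field interaction gain at this scale — is the technically delicate step, but it follows the template of the corresponding non-existence argument in \cite[Theorem 3.6]{kmn:cmp16} adapted to the screened setting.
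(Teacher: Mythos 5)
Your part (i) argument is essentially the paper's: assume a non-trivial minimizer for some sequence $\eps\to 0$, invoke the diameter bound \eqref{cCNeps} from Theorem \ref{t:diam} to replace the screened kernel $e^{-\kappa|x-y|}/(4\pi|x-y|)$ by $(1-\delta)/(4\pi|x-y|)$ on the support of each connected component, discard the nonnegative cross-interactions, and bound each component's excess energy over $E_\eps(-1)$ from below by $\tfrac{\eps^{5/3}\sigma}{2\kappa^2}\bigl((1-\delta)\lambda_c-\lambda\bigr)\int\tilde\chi_{\eps,k}\,dx>0$ via the definitions of $f^*$ and $\lambda_c$ --- exactly the chain of inequalities in \eqref{cutthem}. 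Two small remarks: the preliminary detour through the $\Gamma$-liminf and Corollary \ref{c:min} is harmless but unused, and the volume lower bound $\int\chi_{\eps,k}\,dx\geq c\,\eps$ that you single out as ``the key observation'' is not actually what the argument needs (any positive mass per component gives strict inequality); the ingredient from Theorem \ref{t:diam} that does the work is the diameter bound. The only genuine divergence is part (ii): you construct an explicit single-droplet competitor from a rescaled Gamow minimizer and check its energy lies below $E_\eps(-1)$, whereas the paper gets (ii) in one line from Corollary \ref{c:min}, since $u=-1$ would force $\mu_\eps\equiv 0\not\rightharpoonup\bar\mu$ when $\lambda>\lambda_c$. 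Both are valid; your construction is more self-contained but requires verifying the $o(\eps^{5/3})$ error from the screening correction to the kernel, while the paper's route is免 of computation at the price of leaning on the compactness machinery.
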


\begin{proof}
  Since the statement in {\it ii)} follows immediately from Corollary
  \ref{c:min}, we only need to demonstrate {\it i)}. The strategy is
  analogous to the one used in the proof of \cite[Proposition
  3.2]{m:cmp10}. For $\lambda < \lambda_c$, let $u_\eps$ be a
  minimizer of $E_\eps$ over $\mathcal A$, and assume, by
  contradiction, that $u_\eps \not= -1$ for a sequence of
  $\eps \to 0$. Let $\chi_{\eps,k}$ be as in Theorem \ref{t:diam}. By
  \eqref{Gsum} we have
  \begin{align}
    E_\eps(u_\eps) \geq {\eps^{4/3} \lambda^2 \ell^3 \over 2 \kappa^2}
    + \sum_{k=1}^{N_\eps} \Bigg( \eps \sigma \int_{\mathbb
    R^3} |\nabla \chi_{\eps,k}| dx
    - {2 \eps^{2/3} \lambda \over \kappa^2} \int_{\mathbb
    R^3} \chi_{\eps,k} dx  \notag \\
    + {1 \over 2 \pi} \int_{\mathbb
    R^3} \int_{\mathbb R^3} {e^{-\kappa |x - y|} \over |x - y|}
    \chi_{\eps,k}(x) \chi_{\eps,k}(y) \, dx \, dy \Biggr).
  \end{align}
  At the same time, since by Theorem \ref{t:diam} the diameter of the
  support of $\chi_{\eps,k}$ is bounded above by $C \eps^{1/3}$, for
  every $\delta > 0$ we have $e^{-\kappa |x - y|} \geq 1 - \delta$ for
  all $\eps$ sufficiently small and all
  $x, y \in \text{supp}(\chi_{\eps,k})$. Introducing
  $\tilde \chi_{\eps,k}(x) := \chi_{\eps,k}(\ell_\eps x / \ell)$ as in
  \eqref{chitil}, we can then write
  \begin{align}
    \label{cutthem}
      E_\eps(u_\eps)
      & \geq {\eps^{4/3} \lambda^2 \ell^3 \over 2
        \kappa^2} - {\eps^{5/3} \sigma \lambda \over 2 \kappa^2}
        \sum_{k=1}^{N_\eps} \int_{\mathbb R^3} \tilde \chi_{\eps,k} dx
        \notag \\
      & + {\eps^{5/3} \sigma^{5/3} \over 4^{2/3}} \sum_{k=1}^{N_\eps}
        \Bigg(  \int_{\mathbb R^3} |\nabla \tilde \chi_{\eps,k}| dx + {1
        - \delta \over 8 \pi} \int_{\mathbb R^3} \int_{\mathbb R^3}
        {\tilde \chi_{\eps,k}(x) \tilde \chi_{\eps,k}(y) \over | x - y
        |} dx \, dy \Bigg) \notag \\
      & \geq  {\eps^{4/3} \lambda^2 \ell^3 \over 2
        \kappa^2} - {\eps^{5/3} \sigma \lambda \over 2 \kappa^2}
        \sum_{k=1}^{N_\eps} \int_{\mathbb R^3} \tilde \chi_{\eps,k} dx
        +  {\eps^{5/3} \sigma^{5/3} (1 - \delta) \over 4^{2/3}}
        \sum_{k=1}^{N_\eps} \widetilde E_\infty(\tilde
        \chi_{\eps,k}). 
  \end{align}
  
    Now we substitute the definitions of $f^*$ and $\lambda_c$ in
    \eqref{f*} and \eqref{lamc}, respectively, into
    \eqref{cutthem}. This yields
    \begin{align}
      E_\eps(u_\eps) \geq  {\eps^{4/3} \lambda^2 \ell^3 \over 2
      \kappa^2} + {\eps^{5/3} \sigma \left( (1 - \delta) \lambda_c  -
      \lambda \right) \over 2 \kappa^2} \sum_{k=1}^{N_\eps}
      \int_{\mathbb R^3} \tilde \chi_{\eps,k} dx. 
    \end{align}
    In particular, for $\lambda < \lambda_c$ one can choose $\delta$
    small enough, so that
    $E_\eps(u_\eps) > \eps^{4/3} \lambda^2 \ell^3 / (2 \kappa^2) =
    E_\eps(-1)$ for all $\eps$ sufficiently small, contradicting
    minimality of $u_\eps$.
\end{proof}

\section{Diffuse interface energy $\EE_\eps$} \label{sec:diffuse}

We now consider the diffuse-interface functional $\EE_\eps$
defined in \eqref{EEOK} in the limit $\eps \to 0$ with, as before, $\bar u_\eps$
given by \eqref{def-ueps} and positive $\sigma, \lambda, \kappa, \ell$
fixed.

Let
\begin{align}
  \label{mueps0}
  d \mu^0_\eps(x) := \dfrac12 \eps^{-2/3} \left(1 +
  u^0_\eps(x)\right) dx,
\end{align}
where
\begin{align}
  \label{ueps0}
  u^0_\eps(x) := 
\begin{cases}
  +1 & {\rm if}\ u_\eps(x)>0,
  \\
  -1 & {\rm if}\ u_\eps(x)\le 0,
\end{cases}
\end{align}
and let $v_\eps^0$ satisfy
\begin{align}\label{vdef0}
  -\Delta v_\eps^0 + \kappa^2 v_\eps^0 =
  \mu^0_\eps \qquad \text{in} \quad \TT_\ell. 
\end{align}
With this notation, we are now in the position to state the main
technical result of this paper.

\begin{theorem}[Equicoercivity and $\Gamma$-convergence of $\E_\eps$]
  \label{teogammabis}
  \noindent For $\lambda > 0$ and $\ell > 0$, let $\E_\eps$ be defined
  by (\ref{EEeps}) with $W$ satisfying the assumptions of
  Sec. \ref{subsec:diffuse}, let $\bar u_\eps$ given by \eqref{def-ueps},
  and let $\sigma$ and $\kappa$ be given by \eqref{Wint} and
  \eqref{kappa}, respectively. Then, as $\eps \to 0$ we have
  \begin{align}
    \label{EEGamma}
    \eps^{-4/3} \E_\eps \stackrel{\Gamma}\rightharpoonup E_0(\mu),
  \end{align}
  where $\mu \in \mathcal{M}^+(\TT_\ell) \cap H^{-1}(\TT_\ell)$.  More
  precisely, we have
  \begin{itemize} \item[i)] Compactness and lower bound: Let
    $(u_\eps) \in \mathcal A_\eps$ be such that
    $\limsup_{\eps \to 0} \| u_\eps \|_{L^\infty(\TT_\ell)} \leq 1$
    and
  \begin{align}\label{ZZO.1} \limsup_{\varepsilon \to 0}
    \eps^{-4/3} \E_\eps(u_\eps) < +\infty.
\end{align}
Then, up to extraction of a subsequence, we have
\begin{align} 
  \mu^0_\eps \rightharpoonup \mu \textrm{ in } \mathcal M(\TT_\ell),
  \qquad v_\eps^0 \rightharpoonup v \textrm{ in } H^1(\TT_\ell), 
\end{align}
as $\eps \to 0$, where
$\mu \in \mathcal{M}^+(\TT_\ell) \cap H^{-1}(\TT_\ell)$ and
$v \in H^1(\TT_\ell)$ satisfy
\begin{align} \label{PDEE}
  -\Delta v + \kappa^2 v = \mu
  \qquad \text{in} \quad \TT_\ell.
\end{align}
Moreover, we have
\begin{align}
  \label{EEliminf}
  \liminf_{\varepsilon \to 0}  \eps^{-4/3}
  \E_\eps(u_\eps) \geq E_0(\mu).
\end{align}
\item[ii)] Upper bound: Given
  $\mu \in \mathcal{M}^+(\TT_\ell) \cap H^{-1}(\TT_\ell)$ and
  $v \in H^1(\TT_\ell)$ solving \eqref{PDEE}, there exist
  $(u_\eps) \in \mathcal A_\eps$ such that for the corresponding
  $\mu^0_\eps$, $v_\eps^0$ as in \eqref{mueps0} and (\ref{vdef0}) we
  have
\begin{align} 
  \mu^0_\eps \rightharpoonup \mu \text{ in } \mathcal M(\TT_\ell),
  \qquad v_\eps^0 &\rightharpoonup v \textrm{ in } H^1(\TT_\ell), 
\end{align}
as $\eps \to 0$, and
\begin{align} \label{EElimsup} %
  \limsup_{\varepsilon \to 0} \eps^{-4/3} \E_\eps(u_\eps) \leq E_0(\mu).
\end{align}
\end{itemize}
\end{theorem}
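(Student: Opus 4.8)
The plan is to transfer the sharp interface $\Gamma$-convergence already established in Theorems \ref{thm-Ecompact} and \ref{teogamma} to the diffuse energy $\E_\eps$, by comparing $\E_\eps(u_\eps)$ with the screened sharp interface energy $E_\eps(u^0_\eps)$ of the two-valued field $u^0_\eps$ from \eqref{ueps0}, and then quoting the sharp interface result. The conceptual point is that the screening length $\kappa$ is not imposed but is generated by the quadratic behaviour $W(s)=\tfrac1{2\kappa^2}(s\mp1)^2+o((s\mp1)^2)$ of the double well near its wells, since $W''(\pm1)=\kappa^{-2}$ by \eqref{kappa}.

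For part i) I would write $u_\eps=u^0_\eps+w_\eps$ and split $\TT_\ell$ into the transition set, where $u_\eps$ is bounded away from $\pm1$, and the bulk, where $|u_\eps|\approx1$. On the transition set the Modica--Mortola inequality $\tfrac{\eps^2}2|\nabla u_\eps|^2+W(u_\eps)\ge\eps\sqrt{2W(u_\eps)}\,|\nabla u_\eps|$ recovers the perimeter $\tfrac{\eps\sigma}2\int|\nabla u^0_\eps|$, while on the bulk I would keep the quadratic part of $W$ and combine it with the Coulombic term. The algebraic core is the elementary identity, obtained by completing the square in Fourier series on $\TT_\ell$ and using the neutrality constraint \eqref{neutr} to fix the zero mode,
\[
\frac1{2\kappa^2}\int_{\TT_\ell} w^2\,dx + \frac12\int_{\TT_\ell}(\rho+w)(-\Delta)^{-1}(\rho+w)\,dx \;\ge\; \frac12\int_{\TT_\ell}\rho\,(-\Delta+\kappa^2)^{-1}\rho\,dx ,
\]
with $\rho:=u^0_\eps-\bar u_\eps$, the minimum over $w$ being attained at $\hat w_k=-\kappa^2\hat\rho_k/(|k|^2+\kappa^2)$. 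This shows that the bulk fluctuations can only push the energy above the screened Coulombic energy of $u^0_\eps$, so that, up to an $o(1)$ error made legitimate by the hypothesis $\limsup\|u_\eps\|_{L^\infty(\TT_\ell)}\le1$ (which confines $u_\eps$ to a neighbourhood of the wells where the quadratic approximation of $W$ holds with $O(\delta)$ error), one obtains $\eps^{-4/3}\E_\eps(u_\eps)\ge\eps^{-4/3}E_\eps(u^0_\eps)-o(1)$. Compactness of $\mu^0_\eps$ and $v^0_\eps$ then follows from Theorem \ref{thm-Ecompact} applied to $u^0_\eps\in\mathcal A$, and the lower bound \eqref{EEliminf} from Theorem \ref{teogamma}(i).

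For part ii) I would take the sharp interface recovery sequence $u^0_\eps\in\mathcal A$ furnished by Theorem \ref{teogamma}(ii), with $\mu^0_\eps\rightharpoonup\mu$ and $\limsup\eps^{-4/3}E_\eps(u^0_\eps)\le E_0(\mu)$, and lift it to a diffuse competitor in two steps. First, replace each interface of $u^0_\eps$ by the optimal one-dimensional transition profile of width $\sim\eps$, reproducing $\tfrac{\eps\sigma}2\int|\nabla u^0_\eps|$ up to $o(\eps^{4/3})$. Second, superimpose on the bulk the optimal fluctuation $w_\eps$ saturating the inequality above, namely the one whose potential solves the screened equation, so that the double-well-plus-Coulomb energy reproduces the screened Coulombic energy of $u^0_\eps$. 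A final additive constant restores \eqref{neutr} at lower order, producing $(u_\eps)\in\mathcal A_\eps$ with $\mu^0_\eps\rightharpoonup\mu$, $v^0_\eps\rightharpoonup v$, and \eqref{EElimsup}.

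The hard part is the bookkeeping in the lower bound: the single term $W(u_\eps)$ must simultaneously pay for the sharp interface perimeter near the transition layers and supply the restoring term $\tfrac1{2\kappa^2}w_\eps^2$ that produces screening in the bulk, and these two uses must not be double counted. I would handle this through a threshold $\delta$, invoking Modica--Mortola only on $\{|u_\eps|\le1-\delta\}$ (where $\int_{-1+\delta}^{1-\delta}\sqrt{2W(s)}\,ds\to\sigma$) and using the quadratic lower bound for $W$ only on the bulk $\{|u_\eps|>1-\delta\}$; since the transition set carries vanishing Coulombic energy and the bulk carries vanishing perimeter, sending $\eps\to0$ and then $\delta\to0$ closes the estimate. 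After the rescaling \eqref{chitil} this reduces to the argument of \cite[Section 6]{gms:arma13}, which I would adapt to three dimensions.
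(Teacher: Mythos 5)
Your overall strategy coincides with the paper's: both reduce the diffuse problem to the screened sharp-interface results of Theorems \ref{thm-Ecompact} and \ref{teogamma} by showing that $\E_\eps(u_\eps)$ and $E_\eps(u^0_\eps)$ agree to leading order. The difference is in how that equivalence is obtained. The paper does not re-derive it: it imports the two-sided comparison \eqref{EElb}--\eqref{EEub} wholesale from \cite[Section 4]{m:cmp10} (Lemma 4.1 and Propositions 4.2--4.3 there), and the actual work consists of verifying the hypotheses of those propositions --- namely $\|u_\eps\|_\infty\to 1$, $\E_\eps(u_\eps)\to 0$ and $\|v_\eps\|_\infty\to 0$ (the last via Poincar\'e plus the elliptic estimate $\|\nabla v_\eps\|_\infty\le C$), and, for the upper bound, that the recovery sequence of Theorem \ref{teogamma} has smooth components separated by at least $\eps^\alpha$ with curvature $O(\eps^{-\alpha})$ --- together with the $L^1$ comparison $\|\tilde u_\eps-u^0_\eps\|_1\le C\eps^{4/3}\delta^{-4}$ needed to transfer the weak convergence of measures to $\mu^0_\eps$. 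You instead propose to prove the equivalence from scratch via Modica--Mortola on the transition set plus the Fourier completion-of-the-square identity in the bulk; that identity is correct and is indeed the mechanism by which the screening length $\kappa$ in \eqref{kappa} is generated from $W''(\pm 1)$, so your route makes the origin of the screened kernel transparent, at the price of redoing work that \cite{m:cmp10} already packages.

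One step in your sketch needs more than you give it. The completion-of-the-square inequality requires the quadratic term $\tfrac{1}{2\kappa^2}\int_{\TT_\ell}w_\eps^2\,dx$ over the \emph{whole} torus, while your quadratic lower bound on $W$ is only available on the bulk $\{|u_\eps|>1-\delta\}$; on the transition set $w_\eps=u_\eps-u^0_\eps$ is of order one, and that set has measure only $O(\eps^{4/3}\delta^{-2})$ (from $\min_{|s|\le 1-\delta}W\sim\delta^2$), so naively re-inserting the missing quadratic term there costs $O(\delta^{-2})$ after the $\eps^{-4/3}$ rescaling, which does not close when $\delta\to 0$. The fix is to control the transition-set contribution through the Coulombic term rather than pointwise --- e.g.\ an $L^{6/5}$--$H^{-1}$ interpolation bound on $u_\eps-u^0_\eps$, or equivalently the a priori bound $\|v_\eps\|_\infty\to 0$ that the paper establishes --- showing the cross terms are $o(\eps^{4/3})$. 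Your phrase ``the transition set carries vanishing Coulombic energy'' is exactly the assertion that must be proved here, and it is the content of \cite[Lemma 4.1]{m:cmp10}; with that supplied, your argument goes through.
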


\begin{proof}
  As in \cite{m:cmp10}, the basic strategy is to relate the
  minimization problem for $\mathcal E_\eps$ to that for $E_\eps$ and
  apply the results in Theorems \ref{thm-Ecompact} and \ref{teogamma}.
  The proof relies on the fact, first observed in \cite{m:cmp10}, that
  the energy $\EE_\eps$ is asymptotically equivalent to $E_\eps$ in
  the following sense: For any $\delta > 0$ and
  $u_\eps \in \mathcal A_\eps$ satisfying some mild technical
  conditions (see below) there is $\tilde u_\eps \in \mathcal A$ such
  that
  \begin{align}
    \label{EElb}
    E_\eps[\tilde u_\eps] \leq (1+\delta) \EE_\eps(u_\eps),
  \end{align}
  for all $\eps \ll 1$, and, conversely, for any
  $\tilde u_\eps \in \mathcal A$, again, satisfying some mild
  technical conditions, there is $u_\eps \in \mathcal A_\eps$ such
  that
  \begin{align}
    \label{EEub}
    \EE_\eps[u_\eps] \leq (1+\delta) E_\eps(\tilde u_\eps),
  \end{align}
  for all $\eps \ll 1$. The proof then proceeds exactly as in the
  two-dimensional case \cite[Theorem 1]{gms:arma13}, with
  modifications appropriate to three space dimensions. We outline the
  key differences below.

  For \eqref{EElb} to hold, we need to verify the assumptions of
  \cite[Proposition 4.2]{m:cmp10}, which are equivalent to checking
  that $\| u_\eps \|_\infty \to 1$, $\mathcal E_\eps (u_\eps) \to 0$
  and $\| v_\eps \|_\infty \to 0$ as $\eps \to 0$, where
  $v_\eps(x) := \int_{\TT_\ell} G_0(x - y) (u_\eps(y) - \bar u_\eps)
  dy$. The first and second conditions are clearly satisfied by the
  assumptions of the theorem. To check the third condition, we note
  that the non-local part of the energy may be written in terms of
  $v_\eps$ as
  \begin{align}
    \label{nlocgradveps}
    \frac12 \int_{\TT_\ell} \int_{\TT_\ell}   (u_\eps(x) - \bar u_\eps) G_0(x
    - y) (u_\eps(y) - \bar u_\eps) \, dx \, dy =     \frac12 \int_{\TT_\ell}
    |\nabla v_\eps|^2 dx.
  \end{align}
  Since $\int_{\TT_\ell} v_\eps(x) \, dx = 0$ we have by Poincar\'e's
  inequality that the right-hand side of \eqref{nlocgradveps} is
  bounded below by a multiple of $\| v_\eps\|_2^2$. In turn, the
  latter is bounded below by a multiple of $\| v_\eps \|_\infty^5$, in
  view of the fact that by elliptic regularity we have
  $\|\nabla v_\eps \|_\infty \leq C$ for some $C > 0$ depending only
  on $\ell$, for all $\eps \ll 1$. Therefore, from
  $\mathcal E_\eps(u_\eps) \to 0$ we also obtain that
  $\|v_\eps\|_\infty \to 0$ as $\eps \to 0$. Thus, by \eqref{EElb}
  $\tilde u_\eps$ satisfies the assumptions of Theorem
  \ref{thm-Ecompact}, and so there exists
  $\mu \in \mathcal M(\TT_\ell)$ such that, upon extraction of
  subsequences, $\mu_\eps \rightharpoonup \mu$ in
  $\mathcal M(\TT_\ell)$, where the measure $\mu_\eps$ defined by
  \eqref{def-mueps} with $u_\eps$ replaced by $\tilde u_\eps$. For
  those subsequences, Theorem \ref{teogamma} holds true for $\mu_\eps$
  as well.
  
  Now, from the construction of $\tilde u_\eps$ in the proof of
  \cite[Lemma 4.1]{m:cmp10} we know that
  $\tilde u_\eps(x) = u_\eps^0(x)$ for all $x \in \TT_\ell$ such that
  $|u_\eps(x)| > 1 - \delta^2$. Hence from the bound on
  $\mathcal E_\eps(u_\eps)$ and the assumptions on $W$ we get that
  $\| \tilde u_\eps - u_\eps^0 \|_1 \leq C \eps^{4/3} \delta^{-4}$ for
  some $C > 0$ and all $\eps \ll 1$. This implies that
  $\mu_\eps^0 \rightharpoonup \mu$ in $\mathcal M(\TT_\ell)$ as well
  $\eps \to 0$. Together with the conclusions of Theorems
  \ref{thm-Ecompact} and \ref{teogamma}, this gives the compactness
  and the lower bound statement of Theorem \ref{teogammabis}, in view
  of arbitrariness of $\delta$.

  For \eqref{EEub} to hold, we need to verify the assumptions of
  \cite[Proposition 4.3]{m:cmp10} on $\tilde u_\eps \in \mathcal A$,
  namely, that the connected components of the support of
  $\{ \tilde u_\eps = +1\}$ are smooth and at least $\eps^\alpha$
  apart for some $\alpha \in [0,1)$, have boundaries whose curvature
  is bounded by $\eps^{-\alpha}$, and that
  $\| \tilde v_\eps \|_\infty \to 0$ as $\eps \to 0$, where
  $\tilde v_\eps(x) := \int_{\TT_\ell} G(x - y) (\tilde u_\eps(y) -
  \bar u_\eps) dy$. Clearly the first two assumptions hold true for
  the recovery sequence in the proof of Theorem \ref{teogamma} with
  any $\alpha \in (\frac13, 1)$, provided that $\eps \ll 1$. The third
  assumption is satisfied for all $\eps \ll 1$, in view of the fact
  that the non-local part of the sharp interface energy can be written
  as
  \begin{align}
    \frac12 \int_{\TT_\ell} \int_{\TT_\ell}   (\tilde u_\eps(x) - \bar u_\eps) G_0(x
    - y) (\tilde u_\eps(y) - \bar u_\eps) \, dx \, dy =     \frac12 \int_{\TT_\ell}
    \left( |\nabla \tilde v_\eps|^2 + \kappa^2 \tilde v_\eps^2 \right) dx,
  \end{align}
  and the desired estimate follows from $E_\eps(\tilde u_\eps) \to 0$
  just like in the case of the diffuse interface energy. Thus, the
  proof of the upper bound is concluded by taking the functions
  $u_\eps$ appearing in \eqref{EEub}, associated with the recovery
  sequence $(\tilde u_\eps)$ from Theorem \ref{teogamma}, once again,
  in view of arbitrariness of $\delta$.
\end{proof}

Similarly to the sharp interface energy, as a direct consequence of
Theorem \ref{teogammabis} we have the following characterization of
the minimizers of the diffuse interface energy in the limit
$\eps \to 0$.

\begin{corollary}
  \label{c:diffmin}
  Under the assumptions of Theorem \ref{teogammabis}, let
  $(u_\eps) \in \mathcal A_\eps$ be minimizers of $\mathcal
  E_\eps$. Let $\mu^0_\eps$ be defined in \eqref{mueps0} and
  $v_\eps^0$ be the solution of \eqref{vdef0}, respectively. Then as
  $\eps \to 0$, we have
\begin{align} 
  \mu^0_\eps \rightharpoonup \bar\mu \textrm{ in } \mathcal M(\TT_\ell),
  \qquad v_\eps^0 \rightharpoonup \bar v \textrm{ in } H^1(\TT_\ell), 
\end{align}
where $\bar\mu$ and $\bar v$ are as in Proposition \ref{p:minE0}.
\end{corollary}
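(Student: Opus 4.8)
The plan is to derive Corollary~\ref{c:diffmin} from the fundamental theorem of $\Gamma$-convergence, combining the equicoercivity and $\Gamma$-convergence of Theorem~\ref{teogammabis} with the uniqueness of the minimizer of the limit functional from Proposition~\ref{p:minE0}. The only genuine preliminary is to check that the minimizers $(u_\eps)\in\mathcal A_\eps$ satisfy the two hypotheses of the compactness and lower bound part of Theorem~\ref{teogammabis}, namely $\limsup_{\eps\to0}\|u_\eps\|_{L^\infty(\TT_\ell)}\le 1$ and $\limsup_{\eps\to0}\eps^{-4/3}\E_\eps(u_\eps)<+\infty$; once these are in place the argument is the routine ``convergence of minimizers'' consequence of $\Gamma$-convergence plus equicoercivity.

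For the energy bound I would test minimality against the trivial constant competitor $u\equiv\bar u_\eps\in\mathcal A_\eps$. Its nonlocal term vanishes, so $\E_\eps(\bar u_\eps)=\ell^3 W(\bar u_\eps)$, and since $W(-1)=W'(-1)=0$ and $W''(-1)=\kappa^{-2}$ a Taylor expansion at $u=-1$ with $\bar u_\eps=-1+\lambda\eps^{2/3}$ gives $\eps^{-4/3}\E_\eps(\bar u_\eps)\to \frac{\lambda^2\ell^3}{2\kappa^2}=E_0(0)<+\infty$. By minimality $\eps^{-4/3}\E_\eps(u_\eps)\le \eps^{-4/3}\E_\eps(\bar u_\eps)$ is therefore uniformly bounded. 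The $L^\infty$ bound is the more delicate ingredient: it is a property of minimizers of $\E_\eps$ that follows from the coercivity and symmetry of $W$ (assumptions (i)--(iii)) together with elliptic regularity for the Euler--Lagrange system \eqref{ELeps}, and can be extracted from the analysis of \cite{m:cmp10}. I would quote it here rather than reprove it, and I expect this to be the main technical point of the corollary.

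With both hypotheses verified, Theorem~\ref{teogammabis}(i) yields, along a subsequence, $\mu^0_\eps\rightharpoonup\mu$ in $\mathcal M(\TT_\ell)$ and $v^0_\eps\rightharpoonup v$ in $H^1(\TT_\ell)$ with $-\Delta v+\kappa^2 v=\mu$, together with the lower bound $\liminf_{\eps\to0}\eps^{-4/3}\E_\eps(u_\eps)\ge E_0(\mu)$. For the matching upper bound I would apply Theorem~\ref{teogammabis}(ii) to the measure $\bar\mu$ of Proposition~\ref{p:minE0} (which lies in $\mathcal M^+(\TT_\ell)\cap H^{-1}(\TT_\ell)$): this produces a recovery sequence $(w_\eps)\in\mathcal A_\eps$ with $\limsup_{\eps\to0}\eps^{-4/3}\E_\eps(w_\eps)\le E_0(\bar\mu)$. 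Minimality of $u_\eps$ gives $\E_\eps(u_\eps)\le\E_\eps(w_\eps)$, hence $\limsup_{\eps\to0}\eps^{-4/3}\E_\eps(u_\eps)\le E_0(\bar\mu)$. Chaining the two inequalities produces $E_0(\mu)\le E_0(\bar\mu)=\min_{\nu\in\mathcal M^+(\TT_\ell)\cap H^{-1}(\TT_\ell)}E_0(\nu)$, so $\mu$ is itself a minimizer of $E_0$.

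The conclusion now follows from the uniqueness assertion of Proposition~\ref{p:minE0}: the minimizer of $E_0$ is unique, whence $\mu=\bar\mu$, and since $v$ is the unique $H^1(\TT_\ell)$ solution of $-\Delta v+\kappa^2 v=\bar\mu$ we obtain $v=\bar v$. Because the subsequential limit $(\bar\mu,\bar v)$ is the same regardless of the extracted subsequence, a standard Urysohn argument upgrades subsequential convergence to convergence of the full sequence, giving $\mu^0_\eps\rightharpoonup\bar\mu$ in $\mathcal M(\TT_\ell)$ and $v^0_\eps\rightharpoonup\bar v$ in $H^1(\TT_\ell)$, as claimed.
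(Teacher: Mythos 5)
Your proposal is correct and follows essentially the route the paper intends: the paper states Corollary \ref{c:diffmin} as a direct consequence of Theorem \ref{teogammabis} and Proposition \ref{p:minE0}, and your write-up supplies exactly the standard $\Gamma$-convergence-of-minimizers argument (energy bound from the constant competitor $\bar u_\eps$, the $L^\infty$ bound quoted from \cite{m:cmp10}, lower bound plus recovery sequence for $\bar\mu$, uniqueness of the limit minimizer, and the subsequence argument). No gaps.
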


We emphasize that the limit behavior of the minimal energy obtained in
\eqref{c:diffmin} {\em differs} from that of the unscreened sharp
interface energy one would naively associate with $\mathcal
E_\eps$. In particular, the minimal energy exhibits a threshold
behavior, contrary to that of the minimizers of the unscreened sharp
interface energy studied in \cite{kmn:cmp16,emmert18}.

\smallskip

\begin{proof}[Proof of Theorems \ref{t:main} and \ref{t:f*}] The
  statement of Theorem \ref{t:main} is simply the restatement of
  Corollary \ref{c:diffmin} that does not specify the precise values
  of the constants appearing there. In turn, the statement of Theorem
  \ref{t:f*} uses the explicit values of
  $\sigma = {2 \sqrt{2} \over 2}$ and $\kappa = {1 \over \sqrt{2}}$
  for \eqref{EEOK}, together with the bounds on $f^*$ obtained in
  \eqref{f*B} and \eqref{eq:f*lb}.
\end{proof}

\paragraph*{Acknowledgements.}

HK was supported by DFG via grant \#392124319. CBM was supported, in
part, by NSF via grants DMS-1313687 and DMS-1614948.  MN was partially
supported by INDAM-GNAMPA, and by the University of Pisa Project PRA
2017 ``Problemi di ottimizzazione e di evoluzione in ambito
variazionale''.

\bibliography{../nonlin,../mura,../stat}

\begin{thebibliography}{10}

\bibitem{blanc15}
X.~Blanc and M.~Lewin.
\newblock The crystallization conjecture: a review.
\newblock {\em EMS Surv. Math. Sci.}, 2:225--306, 2015.

\bibitem{chen06}
X.~Chen and Y.~Oshita.
\newblock Periodicity and uniqueness of global minimizers of an energy
  functional containing a long-range interaction.
\newblock {\em SIAM J. Math. Anal.}, 37:1299--1332, 2006.

\bibitem{choksi01}
R.~Choksi.
\newblock Scaling laws in microphase separation of diblock copolymers.
\newblock {\em J. Nonlinear Sci.}, 11:223--236, 2001.

\bibitem{cmt:nams17}
R.~Choksi, C.~B. Muratov, and I.~Topaloglu.
\newblock An old problem resurfaces nonlocally: Gamow's liquid drops inspire
  today's research and applications.
\newblock {\em Notices Amer. Math. Soc.}, 64:1275--1283, 2017.

\bibitem{choksi11}
R.~Choksi and M.~A. Peletier.
\newblock Small volume fraction limit of the diblock copolymer problem: {II.
  Diffuse} interface functional.
\newblock {\em SIAM J. Math. Anal.}, 43:739--763, 2011.

\bibitem{choksi09}
R.~Choksi, M.~A. Peletier, and J.~F. Williams.
\newblock On the phase diagram for microphase separation of diblock copolymers:
  an approach via a nonlocal {Cahn-Hilliard} functional.
\newblock {\em SIAM J. Appl. Math.}, 69:1712--1738, 2009.

\bibitem{daneri18a}
S.~Daneri and E.~Runa.
\newblock Pattern formation for colloidal systems.
\newblock Preprint: arXiv:1810.11884, 2018.

\bibitem{emmert18}
E.~Emmert, R.~L. Frank, and T.~K\"onig.
\newblock Liquid drop model for nuclear matter in the dilute limit.
\newblock https://arxiv.org/pdf/1807.11904.pdf, 2018.

\bibitem{frank16}
R.~L. Frank, R.~Killip, and P.~T. Nam.
\newblock Nonexistence of large nuclei in the liquid drop model.
\newblock {\em Lett. Math. Phys.}, 106:1033--1036, 2016.

\bibitem{frank15}
R.~L. Frank and E.~H. Lieb.
\newblock A compactness lemma and its application to the existence of
  minimizers for the liquid drop model.
\newblock {\em SIAM J. Math. Anal.}, 47:4436--4450, 2015.

\bibitem{gamow30}
G.~Gamow.
\newblock Mass defect curve and nuclear constitution.
\newblock {\em Proc. Roy. Soc. London A}, 126:632--644, 1930.

\bibitem{gms:arma13}
D.~Goldman, C.~B. Muratov, and S.~Serfaty.
\newblock The {$\Gamma$}-limit of the two-dimensional {Ohta-Kawasaki} energy.
  {I. Droplet} density.
\newblock {\em Arch. Rational Mech. Anal.}, 210:581--613, 2013.

\bibitem{gms:arma14}
D.~Goldman, C.~B. Muratov, and S.~Serfaty.
\newblock The {$\Gamma$}-limit of the two-dimensional {Ohta-Kawasaki} energy.
  {Droplet} arrangement via the renormalized energy.
\newblock {\em Arch. Rational Mech. Anal.}, 212:445--501, 2014.

\bibitem{heisenberg34}
W.~Heisenberg.
\newblock Consid{\'e}rations th{\'e}oriques g{\'e}n{\'e}rales sur la structure
  du noyau.
\newblock In {\em Rapports et Discussions du Septi\'eme Conseil de Physique
  tenu a Bruxelles du 22 au 29 Octobre 1933}, pages 289--335. Gauthier-Villars,
  1934.

\bibitem{kmn:cmp16}
H.~Kn\"upfer, C.~B. Muratov, and M.~Novaga.
\newblock Low density phases in a uniformly charged liquid.
\newblock {\em Comm. Math. Phys.}, 345:141--183, 2016.

\bibitem{lattimer85}
J.~M. Lattimer, C.~J. Pethick, D.~G. Ravenhall, and D.~Q. Lamb.
\newblock Physical properties of hot, dense matter: {The} general case.
\newblock {\em Nucl. Phys. A}, 432:646--742, 1985.

\bibitem{lebris05}
C.~Le~Bris and P.-L. Lions.
\newblock From atoms to crystals: a mathematical journey.
\newblock {\em Bull. Amer. Math. Soc. (N.S.)}, 42:291--363, 2005.

\bibitem{lieb81}
E.~H. Lieb.
\newblock Thomas-{F}ermi and related theories of atoms and molecules.
\newblock {\em Rev. Mod. Phys.}, 53:603--641, 1981.

\bibitem{lu14}
J.~Lu and F.~Otto.
\newblock Nonexistence of a minimizer for {Thomas-Fermi-Dirac-von Weizs\"acker}
  model.
\newblock {\em Comm. Pure Appl. Math.}, 67:1605--1617, 2014.

\bibitem{maruyama05}
T.~Maruyama, T.~Tatsumi, D.~N. Voskresensky, T.~Tanigawa, and S.~Chiba.
\newblock Nuclear ``pasta'' structures and the charge screening effect.
\newblock {\em Phys. Rev. C}, 72:015802, 2005.

\bibitem{modica87}
L.~Modica.
\newblock The gradient theory of phase transitions and the minimal interface
  criterion.
\newblock {\em Arch. Rational Mech. Anal.}, 98:123--142, 1987.

\bibitem{muller93}
S.~M\"uller.
\newblock Singular perturbations as a selection criterion for periodic
  minimizing sequences.
\newblock {\em Calc. Var. PDE}, 1:169--204, 1993.

\bibitem{m:pre02}
C.~B. Muratov.
\newblock Theory of domain patterns in systems with long-range interactions of
  {Coulomb} type.
\newblock {\em Phys. Rev. E}, 66:066108 pp. 1--25, 2002.

\bibitem{m:cmp10}
C.~B. Muratov.
\newblock Droplet phases in non-local {Ginzburg-Landau} models with {Coulomb}
  repulsion in two dimensions.
\newblock {\em Comm. Math. Phys.}, 299:45--87, 2010.

\bibitem{ohta86}
T.~Ohta and K.~Kawasaki.
\newblock Equilibrium morphologies of block copolymer melts.
\newblock {\em Macromolecules}, 19:2621--2632, 1986.

\bibitem{ren03}
X.~Ren and J.~Wei.
\newblock On energy minimizers of the diblock copolymer problem.
\newblock {\em Interfaces Free Bound.}, 5:193--238, 2003.

\bibitem{rougerie16}
N.~Rougerie and S.~Serfaty.
\newblock Higher dimensional {Coulomb} gases and renormalized energy
  functionals.
\newblock {\em Comm. Pure Appl. Math.}, 69:0519--0605, 2016.

\bibitem{spadaro09}
E.~Spadaro.
\newblock Uniform energy and density distribution: diblock copolymers'
  functional.
\newblock {\em Interfaces Free Bound.}, 11:447--474, 2009.

\bibitem{weizsacker35}
C.~F. von Weizs{\"a}cker.
\newblock Zur {Theorie} der {Kernmassen}.
\newblock {\em Zeitschrift f{\"u}r Physik A}, 96:431--458, 1935.

\end{thebibliography}
\bibliographystyle{plain}

\end{document}